\documentclass[12pt]{amsart}
\usepackage{a4wide, amssymb}

\begin{document}

\newcommand{\iii}{{\mathrm i}}
\newcommand{\Ht}{H}
\newcommand{\vs}{{\mathfrak a}}
\newcommand{\Ka}{K_\infty^0}
\newcommand{\modulus}{\delta}
\newcommand{\spclrt}{\delta}
\newcommand{\cu}{\operatorname{cus}}
\newcommand{\der}{\operatorname{der}}
\newcommand{\op}[1]{\overline{#1}}
\newcommand{\di}{\operatorname{dis}}
\newcommand{\rts}{\Phi}
\newcommand{\End}{\operatorname{End}}
\newcommand{\Mat}{\operatorname{Mat}}
\newcommand{\nonorth}{\mathcal{N}}
\newcommand{\GSpin}{\operatorname{GSpin}}
\newcommand{\F}{\mathbb{F}}
\newcommand{\nm}{{\mathcal N}}
\newcommand{\C}{{\mathbb C}}
\newcommand{\N}{{\mathbb N}}
\newcommand{\R}{{\mathbb R}}
\newcommand{\Z}{{\mathbb Z}}
\newcommand{\Q}{{\mathbb Q}}
\newcommand{\grp}[1]{{\bf{#1}}}
\newcommand{\Id}{\operatorname{Id}}
\newcommand{\A}{{\mathbb A}}
\newcommand{\bH}{{\mathbb H}}
\newcommand{\Lie}{\operatorname{Lie}}
\newcommand{\OOO}{\mathcal{O}}
\newcommand{\varbeta}{\tilde\beta}
\newcommand{\srts}{\Delta}
\newcommand{\gf}{{\mathfrak g}}
\newcommand{\mf}{{\mathfrak m}}
\newcommand{\kf}{{\mathfrak k}}
\newcommand{\ho}{{\mathfrak o}}
\newcommand{\pg}{{\mathfrak p}}
\newcommand{\mM}{{\mathfrak M}}
\newcommand{\mN}{{\mathfrak N}}
\newcommand{\HHH}{{\mathcal H}}
\newcommand{\M}{{\mathcal M}}
\newcommand{\Latt}{{\mathcal Z}}
\newcommand{\Co}{{\mathcal C}}
\newcommand{\cO}{{\mathcal O}}
\newcommand{\pars}{{\mathcal P}}
\newcommand{\leviP}{{\mathcal F}}
\newcommand{\levis}{{\mathcal L}}
\newcommand{\cA}{{\mathcal A}}
\newcommand{\AF}{{\mathcal A}}
\newcommand{\K}{\mathbf{K}}
\newcommand{\LieU}{\mathfrak{u}}
\newcommand{\LieM}{\mathfrak{m}}
\newcommand{\LieUbar}{\op{\mathfrak{u}}}
\newcommand{\LieG}{\mathfrak{g}}
\newcommand{\cU}{{\mathcal U}}
\newcommand{\plnch}{\operatorname{plnch}}
\newcommand{\bs}{\backslash}
\newcommand{\sm}[4]{\left(\begin{smallmatrix}{#1}&{#2}\\{#3}&{#4}\end{smallmatrix}\right)}
\newcommand{\temp}{\operatorname{temp}}
\newcommand{\disc}{\operatorname{disc}}
\newcommand{\cusp}{\operatorname{cusp}}
\newcommand{\sprod}[2]{\left\langle#1,#2\right\rangle}
\renewcommand{\Im}{\operatorname{Im}}
\renewcommand{\Re}{\operatorname{Re}}
\newcommand{\Ind}{\operatorname{Ind}}
\newcommand{\Tr}{\operatorname{Tr}}
\newcommand{\tr}{\operatorname{tr}}
\newcommand{\rad}{\operatorname{rad}}
\newcommand{\Hom}{\operatorname{Hom}}
\newcommand{\Ker}{\operatorname{Ker}}
\newcommand{\vol}{\operatorname{vol}}
\newcommand{\Area}{\operatorname{Area}}
\newcommand{\SL}{\operatorname{SL}}
\newcommand{\GL}{\operatorname{GL}}
\newcommand{\Sp}{\operatorname{Sp}}
\newcommand{\SO}{\operatorname{SO}}
\newcommand{\Ad}{\operatorname{Ad}}
\newcommand{\supp}{\operatorname{supp}}
\newcommand{\nnn}{{\mathfrak n}}
\newcommand{\ppp}{{\mathfrak p}}
\newcommand{\orbit}{{\mathcal O}}
\newcommand{\mmm}{{\mathfrak m}}
\newcommand{\kkk}{{\mathfrak k}}
\newcommand{\card}[1]{\lvert{#1}\rvert}
\newcommand{\abs}[1]{\lvert#1\rvert}
\newcommand{\norm}[1]{\lVert#1\rVert}
\newcommand{\one}{\mathbf 1}
\newcommand{\aaa}{\mathfrak{a}}
\newcommand{\eps}{\epsilon}
\newcommand{\ad}{\operatorname{ad}}
\newcommand{\proj}{\operatorname{proj}}
\newcommand{\level}{\operatorname{level}}

\newtheorem{theorem}{Theorem}
\newtheorem{claim}{Claim}
\newtheorem{lemma}{Lemma}
\newtheorem{conjecture}{Conjecture}
\newtheorem{hypothesis}{Hypothesis}
\newtheorem{proposition}{Proposition}
\newtheorem{corollary}{Corollary}
\newtheorem{result}{Result}
\newtheorem{assumption}{Assumption}
\theoremstyle{definition}
\newtheorem{definition}{Definition}
\theoremstyle{remark}
\newtheorem*{example}{Example}
\newtheorem*{question}{Question}
\newtheorem{remark}{Remark}

\title[Analytic properties of intertwining operators II]{On the analytic properties of intertwining operators II: local degree bounds and limit multiplicities}

\author{Tobias Finis}
\address{Universit\"at Leipzig, Mathematisches Institut, Postfach 100920, D-04009 Leipzig, Germany}
\email{finis@math.uni-leipzig.de}

\author{Erez Lapid}
\address{Department of Mathematics, The Weizmann Institute of Science, Rehovot 7610001, Israel}
\email{erez.m.lapid@gmail.com}

\date{\today}

\begin{abstract}
In this paper we continue to study the degrees of matrix coefficients of intertwining operators associated to reductive groups over $p$-adic local fields.
Together with previous analysis of global normalizing factors, 
we can control the analytic properties of global intertwining operators for a large class of reductive groups over number fields,
in particular for inner forms of $\GL(n)$ and $\SL(n)$ and quasi-split classical groups.
This has a direct application to the limit multiplicity problem for these groups. 
\end{abstract}

\maketitle

\tableofcontents

\section{Introduction}
Let $\grp{G}$ be a reductive algebraic group defined over a $p$-adic field $F$ with residue field $\F_q$ and let $\varpi$ be
a uniformizer of $F$. Let $G = \grp{G} (F)$ and let
$K_0$ be a special maximal compact subgroup of $G$.
Let $\grp{P} = \grp{M} \grp{U}$ be a maximal parabolic subgroup of $\grp{G}$ defined over $F$, $\grp{\op P}$ the opposite parabolic subgroup
and $\pi$ a smooth irreducible representation of $M = \grp{M} (F)$ on a complex vector space.
We consider the family of induced $G$-representations $I_P(\pi,s)$, $s\in\C$, which extend the
fixed $K_0$-representation $I^{K_0}_{P \cap K_0} (\pi|_{M \cap K_0})$,
and the associated
intertwining operators
\[
M (s) = M_{\op P|P}(\pi,s) :
I^{K_0}_{P \cap K_0} (\pi|_{M \cap K_0}) =
 I_P(\pi,s)\rightarrow I_{\op P}(\pi,-s) = I^{K_0}_{\op P \cap K_0} (\pi|_{M \cap K_0}),
\]
which we regard as a family of linear maps between
the vector spaces $I^{K_0}_{P \cap K_0} (\pi|_{M \cap K_0})$ and
$I^{K_0}_{\op P \cap K_0} (\pi|_{M \cap K_0})$ that are independent of $s$.
For any closed subgroup $K$ of $K_0$, let
\[
M(s)^K :
I^{K_0}_{P \cap K_0} (\pi|_{M \cap K_0})^K \to
I^{K_0}_{\op P \cap K_0} (\pi|_{M \cap K_0})^K
\]
be the restriction of $M(s)$ to the space of $K$-invariant vectors
in $I^{K_0}_{P \cap K_0} (\pi|_{M \cap K_0})$.
We recall that the matrix coefficients of the linear operators $M(s)$ are rational functions of $q^{-s}$,
and that the degrees of the denominators are bounded independently of $\pi$ (cf. Remark \ref{RemarkRationality} below).
For any $n \ge 1$ write $K_n$ for the principal congruence subgroup of $K_0$ of level $\varpi^n$ with respect to a fixed faithful
$F$-rational representation $\rho$ of $G$ and a suitable lattice $\Lambda_\rho$ in the space of $\rho$ (see
\eqref{EqnPrincipalCongruence} below for the precise definition).
Let $\grp{\hat M}$ be the $F$-simple normal subgroup of $\grp{G}$ generated by $\grp{U}$ and $\grp{\op U}$.

We are interested in the following property of the group $G$.

\begin{definition}
\begin{enumerate}
\item A group $G$ satisfies property (BDmax), if there exists a constant $C > 0$ such that for all maximal parabolic subgroups $P = MU$,
smooth irreducible representations $\pi$ of $M$ and all $n \ge 1$,
the degrees of the numerators of the matrix coefficients of $M(s)^{K_n \cap \hat{M}}$ are bounded by $C n$.
\item A group satisfies property (BD), if all its Levi subgroups satisfy property (BDmax).
\end{enumerate}
\end{definition}

We also propose
the following supplement in a global situation, where we consider a reductive group $\grp{G}$ defined over a number field $k$ and
its base change to $F = k_v$ for all non-archimedean places $v$ of $k$. In this case,
we obtain the open compact subgroups $K_{0,v}$ and $K_{n,v}$ of $\grp{G} (k_v)$ from a fixed faithful $k$-rational representation $\rho$ of $G$ and
an $\OOO_k$-lattice $\Lambda_\rho$ in the space of $\rho$. The groups $K_{0,v}$
will be automatically hyperspecial for almost all $v$, and we can ensure that they are special for all $v$.


\begin{definition}
We say that $\grp{G}$ satisfies property (BD) for a set $S$ of non-archimedean valuations of $k$, if the local groups $\grp{G} (k_v)$, $v \in S$,
satisfy (BD) with a uniform value of $C$.
\end{definition}

We conjecture that these properties hold for all reductive groups over local fields or number fields, respectively (with $S=S_{\operatorname{fin}}$,
the set of all non-archimedean valuations, in the latter case).

In \cite{MR3001800}, property (BD) was established for the groups $\GL (n)$ and $\SL (n)$ over local fields and number fields
(with $S=S_{\operatorname{fin}}$ in the latter case).\footnote{We remark that the definition of property (BD) for a reductive group $\grp{G}$
over a number field $k$ in \cite{MR3352530} is slightly different from the current definition.
Our current formulation seems more natural, and in any case our current property (BD) for the set $S_{\operatorname{fin}}$ implies property (BD)
in the old formulation (see Remark \ref{RemarkBD} below).}
The main result of the current paper is that property (BD) for a group $G$ is implied by a quantitative bound
on the support of supercuspidal matrix coefficients (property (PSC), see Definition \ref{DefinitionPSC})
for all semisimple normal subgroups of proper Levi subgroups of $G$ (Theorem \ref{TheoremBD} below). Previously, this
implication had been proven in [ibid.] only for a restricted class of parabolic subgroups $\grp{P}$.
By [ibid., Corollary 13], Property (PSC) holds for a reductive group if all its irreducible supercuspidal representations
are induced from cuspidal representations of subgroups that are open compact modulo the center.
As a consequence of Kim's exhaustion theorem for supercuspidal representations \cite{MR2276772}, we conclude that for every
reductive group $\grp{G}$ over a number field $k$ there exists a finite set $S_0$ of non-archimedean places of $k$ such that
$\grp{G}$ has property (BD) for the set $S_{\operatorname{fin}} - S_0$ (Theorem \ref{TheoremGlobal}).
The only remaining question is to establish the local property (BD) for the finitely many $v \in S_0$.
Moreover, using additional known results on supercuspidal representations, for inner forms of $\GL(n)$ or $\SL(n)$
we can take $S_0$ to be empty, while for classical groups we can take it to be the set of all places of residual
characteristic $2$ (Corollary \ref{CorClassical}).
At the end of the paper, we combine our results with \cite{1603.05475} and spell out the consequences for the limit multiplicity
problem for inner forms of $\GL(n)$ or $\SL(n)$, quasi-split classical groups and groups of split rank two (Corollary \ref{CorLM}).

\section{Notation}
Let $F$ be a $p$-adic field (i.e. a non-archimedean local field of characteristic $0$) with normalized absolute value $\abs{\cdot}$
Let $\OOO$ be the ring of integers of $F$, $\varpi$ a uniformizer of $F$ and $q$ the cardinality of the residue field of $F$.

As a rule, we write $X=\grp{X}(F)$ whenever $\grp{X}$ is a variety over $F$.
Let $\grp{G}$ be a connected reductive algebraic group defined over $F$.
All algebraic subgroups that will be considered in the sequel are implicitly assumed to be defined over $F$.
Fix a maximal $F$-split torus $\grp{T}_0$ and a minimal parabolic subgroup $\grp{P}_0 = \grp{M}_0 \grp{U}_0 \supset \grp{T}_0$ of $\grp{G}$,
where $\grp{M}_0 = C_{\grp{G}} (\grp{T}_0)$ is a minimal Levi subgroup of $\grp{G}$. Let $\rts = R(\grp{T}_0,\grp{G})$
be the set of roots of $\grp{T}_0$ and $\Sigma \subset \rts$ the subset of reduced roots. For any algebraic subgroup $\grp{X}$ of $\grp{G}$ normalized by $\grp{T}_0$ write $\rts_{\grp{X}} = R(\grp{T}_0,\grp{X})\subset \rts$.
For any
$\alpha \in \rts$ let $\mathfrak{u}_\alpha$ be its root space in the Lie algebra
$\mathfrak{g} = \Lie \grp{G}$ of $\grp{G}$, and
for any
$\alpha \in \Sigma$ let $\grp{U}_\alpha$ be the associated unipotent subgroup of $\grp{G}$ (i.e. the subgroup whose Lie algebra is the sum of
the root spaces $\mathfrak{u}_{i \alpha}$, $i > 0$).
For any root $\alpha \in \rts$
its absolute value $\abs{\alpha}$ on $T_0$ extends uniquely to a homomorphism $\abs{\alpha}: M_0 \rightarrow\R_{>0}$.

The choice of $\grp{P}_0$ fixes a set of positive roots $R(\grp{T}_0,\grp{U}_0) \subset \rts$.
Let $\Delta_0 \subset \Sigma$ be the corresponding subset of simple roots.
For any standard parabolic subgroup $\grp P$ of $\grp{G}$ with standard Levi decomposition $\grp{P}=\grp{M}\grp{U}$ we denote by
$\grp{\op P}=\grp{M}\grp{\op U}$ the opposite parabolic subgroup.

Fix a special maximal compact subgroup $K_0$ of $G$ (more precisely, the stabilizer of a special point in the apartment associated
to $\grp{T}_0$), so that we have the Iwasawa decomposition $P_0K_0=G$.
Also,
for any parabolic subgroup $\grp{P} = \grp{M} \grp{U}$ with Levi subgroup
$\grp{M} \supset \grp{M}_0$
we have $(P \cap K_0) = (M \cap K_0) (U \cap K_0)$.
Fix a faithful representation $\rho: \grp{G} \to \GL (V)$ (defined over $F$) and an $\OOO$-lattice $\Lambda_\rho$ in
the representation space $V$ such that $K_0 = \{ g \in G \, : \, \rho (g) \Lambda_\rho = \Lambda_\rho \}$, and
for $n=1,2,\dots$ let
\begin{equation} \label{EqnPrincipalCongruence}
K_n = K_{n,\rho} = \{ g \in G \, : \, \rho(g) v  \equiv v \pmod{\varpi^n \Lambda_\rho}, \quad v \in \Lambda_\rho \}
\end{equation}
be the associated principal congruence subgroups of $K_0$.

Suppose now that $\grp{P}=\grp{M}\grp{U}$ is a standard maximal parabolic subgroup.
Let $\chi_P$ be the fundamental weight of $\grp{P}$. Some integral power of $\chi_P$ defines a rational character of $\grp{P}$ trivial on $\grp{U}$.
Therefore $\abs{\chi_P}$ defines a character $\abs{\chi_P}:P\rightarrow\R_{>0}$ and we can extend it uniquely to a
right-$K_0$-invariant function, still denoted by $\abs{\chi_P}$, on $G$.
Let $\pi = (\pi,V_\pi)$ be an irreducible smooth representation of $M$ on
a complex vector space.
Let $\delta_P$ be the modulus function of $P$.
Consider the family of induced representations
$I_P(\pi,s)$, $s \in \C$, of $G$ which extend the $K_0$-representation $I^{K_0}_{P \cap K_0}(\pi|_{M \cap K_0})$.
Namely, $I_P (\pi,s)$ is the space of all smooth functions $\varphi: G \to V_\pi$ with
$\varphi (p g) = \abs{\chi_P} (p)^s \delta_P (p)^{1/2} \pi (p) \varphi (g)$ for all $p \in P$, $g\in G$, where $\pi$ is extended to $P$
via the canonical projection $P \to M$, and the $G$-action is given by right translations.
Any smooth function $\varphi: K_0 \to V_\pi$ with $\varphi (p k) = \pi (p) \varphi(k)$ for all $k \in P \cap K_0$ extends uniquely
to a function $\varphi_s \in I_P (\pi,s)$.
Let $\pi^\vee$ be the contragredient of $\pi$ and denote the pairing
between $V_\pi$ and $V_{\pi^\vee}$ by $(\cdot,\cdot)$. Then
\[
(\varphi,\varphi^\vee) = \int_{K_0} (\varphi (k), \varphi^\vee (k)) \, dk
\]
defines a pairing between $I_P (\pi,s)$ and $I_P (\pi^\vee,-s)$. Fix a Haar measure on $\op U$.
The intertwining operators $M(s)=M_{\op P|P}(\pi,s):I_P(\pi,s)\rightarrow I_{\op P}(\pi,-s)$,
which are defined by meromorphic continuation of the integrals
\[
(M (s) \varphi) (g) = \int_{\op U} \varphi ({\op u} g) \ d \op u, \quad \varphi \in I_P (\pi, s),
\]
were first studied in this generality by Harish-Chandra. (See \cite[Section IV]{MR1989693} for a self-contained treatment, cf. also
\cite{MR610479, MR544991}.)
It is known that the matrix coefficients $(M(s)\varphi_s,\varphi_{s}^\vee)$ for
$\varphi\in I^{K_0}_{P \cap K_0}(\pi|_{M \cap K_0})$ and $\varphi^\vee\in I^{K_0}_{{\op P} \cap K_0}(\pi^\vee|_{M \cap K_0})$ are rational functions
of $q^{-s}$ \cite[IV.1.1]{MR1989693} and that the degree of the denominator is bounded in terms of $\grp{G}$ only [ibid., IV.1.2].
We will recall this below (see Remark \ref{RemarkRationality}).

Fix an $\OOO$-lattice $\Lambda_\LieG \subset \LieG =\Lie \grp{G}$ stabilized by the operators $\Ad (k)$, $k \in K_0$.
Define a norm on $\LieG$ by $\norm{\sum_{i=1}^dt_iX_i}_{\LieG}=\max_{1 \le i \le d} \abs{t_i}$
for any $\OOO$-basis $X_1,\dots,X_d$ of $\Lambda_\LieG$. This defines a norm $\norm{\cdot}_{\End(\LieG)}$
on $\End(\LieG)$, namely $\norm{A}_{\End(\LieG)}$ is the maximum of the absolute values of the matrix
coefficients of $A$ with respect to the basis $X_1,\dots,X_d$. For any $g\in G$ we write $\norm{g}_G=\norm{\Ad(g)}_{\End(\LieG)}$
where $\Ad:\grp{G}\rightarrow\GL(\LieG)$
is the adjoint representation, and for any real number $R$ we set
\[
B(R)=\{g\in G:\norm{g}_G\le q^R\},
\]
which is a compact set modulo $Z$. We often omit the subscript from $\norm{\cdot}$ if it is clear from the context.

In the global situation of a reductive group $\grp{G}$ defined over a number field $k$,
we need of course to fix analogous global data that induce
the local data pertaining to $\grp{G} (k_v)$ for the non-archimedean places $v$ of $k$. In particular,
we fix a faithful representation $\rho: \grp{G} \to \GL (V)$ defined over $k$ and an $\OOO_k$-lattice $\Lambda_\rho$ in the $k$-vector space $V$,
and for every non-archimedean place $v$ of $k$ set $\Lambda_{\rho,v} = \Lambda_\rho \otimes_{\OOO_k} \OOO_{k_v} \subset V_v = V \otimes_{k}k_v$.
Using the base change $\rho_v$ of $\rho$ to $k_v$ and the lattice $\Lambda_{\rho,v}$,
we obtain open compact subgroups $K_{n,v} \subset \grp{G} (k_v)$, $n \ge 0$, as in \eqref{EqnPrincipalCongruence}.
It is well known that $K_{0,v}$ is then hyperspecial for almost all $v$ \cite[\S 3.9]{MR546588}, and by an appropriate choice of
$\Lambda_\rho$ we can ensure that it is special for all $v$.
We also fix an $\OOO_k$-lattice $\Lambda_\LieG \subset \LieG$ to define the local norms $\norm{\cdot}_{\grp{G} (k_v)}$ via base change to $\OOO_{k_v}$.

For a compact group $K$ let $e_K$ be the probability measure on $K$.
On any smooth $K$-representation on a complex vector space $e_K$ acts as the projector to the space of $K$-invariants.


\section{The main result}

We recall the definition of a fundamental boundedness property for the support of supercuspidal matrix coefficients \cite[Definition 7]{MR3001800}.
It is technically convenient to formulate it as follows. Recall that a smooth representation of the group $H$ of $F$-points of a reductive group $\grp{H}$ defined over $F$ is called quasicuspidal if its Jacquet modules with respect to all proper parabolic subgroups vanish.

\begin{definition} \label{DefinitionPSC}
A reductive group $\grp{H}$ defined over $F$ has property (PSC), if there exists a constant $c > 0$ such that for every
quasicuspidal representation $\pi$ of $H$ the support of the matrix coefficients $(\pi (h) v, v^\vee)$,
$v \in \pi^{K^H_n \cap H^{\der}}$, $v^\vee \in (\pi^\vee)^{K^H_n \cap H^{\der}}$, is contained in the set
$B^H (c n)$ for any $n \ge 1$.
\end{definition}

Note that property (PSC) for a group $\grp{H}$ is equivalent to property (PSC) for its derived group $\grp{H}^{\der}$. (It is really a property of semisimple groups. The extension to general reductive groups is only for technical reasons.)
Moreover, it suffices to consider only irreducible supercuspidal representations $\pi$ in Definition \ref{DefinitionPSC} (as was done in \cite{MR3001800}). We also note that property (PSC)
does not depend on the choice of the representation $\rho$ used to define $K^H_n$ and the norm $\norm{\cdot}_{\mathfrak{h}}$
on the Lie algebra of $\grp{H}$, although the possible values of $c$ will depend on these choices.
In the following, we will consider property (PSC) only for semisimple
subgroups $\grp{H}$ of $\grp{G}$, and use $K^H_n = K_n \cap H$ and the restriction of the fixed norm on $\LieG$.

We have the following simple compatibility results for property (PSC).

\begin{lemma} \label{LemmaPSCComp}
\begin{enumerate}
\item If a group $H$ satisfies property (PSC), then any quotient by a central subgroup also satisfies property (PSC).
\item Any direct product of groups satisfying property (PSC) also satisfies property (PSC).
\end{enumerate}
\end{lemma}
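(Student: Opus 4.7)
The plan is to use the remark preceding the lemma: property (PSC) depends only on the derived group of $\grp{H}$, is independent of the choices of $\rho$ and $\Lambda_\LieG$ up to the value of $c$, and may be tested on irreducible supercuspidal representations alone.

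For part (1), I would first replace $H$ and $H/Z$ by their derived groups. Since $(H/Z)^{\der} = H^{\der}/(H^{\der} \cap Z)$ and $H^{\der} \cap Z$ sits inside $Z(H^{\der})$, which is finite because $H^{\der}$ is semisimple, this reduces the problem to preserving (PSC) under a finite central isogeny $p: \grp{H} \to \grp{H}/\grp{F}$ of semisimple groups. Given an irreducible supercuspidal representation $\sigma'$ of $H/F$, I would form the pullback $\sigma = \sigma' \circ p$, which remains irreducible (since $F$ acts trivially on $\sigma$, every $H$-subrepresentation is also an $(H/F)$-subrepresentation) and supercuspidal (parabolic subgroups of $\grp{H}/\grp{F}$ are images of those of $\grp{H}$, and Jacquet functors are compatible with pullback by $p$). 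Since $dp$ is an isomorphism of Lie algebras, the choice-independence of (PSC) permits picking compatible lattices so that $p(K_n^H) = K_n^{H/F}$ and $\norm{p(h)}_{H/F}$ is comparable to $\norm{h}_H$. A $K_n^{H/F}$-fixed vector of $\sigma'$ then corresponds to a $K_n^H$-fixed vector of $\sigma$, so (PSC) for $H$ bounds the support of the matrix coefficients of $\sigma$ inside $B^H(cn)$, which projects via $p$ into $B^{H/F}(c' n)$ as required. The main bookkeeping task is aligning the congruence filtrations and norms under $p$ up to a bounded additive shift in $n$, which is routine via the $p$-adic exponential map once one exploits the choice-independence of (PSC).

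For part (2), a similar reduction handles $H = H_1 \times H_2$ with each $H_i$ semisimple. Every irreducible supercuspidal representation of $H$ is of the form $\sigma_1 \boxtimes \sigma_2$, where each $\sigma_i$ is an irreducible supercuspidal representation of $H_i$. With $\rho$ chosen compatibly with the product structure, one has $K_n^H = K_n^{H_1} \times K_n^{H_2}$ and $e_{K_n^H} = e_{K_n^{H_1}} \otimes e_{K_n^{H_2}}$, so any $K_n^H$-invariant vector in $\sigma_1 \boxtimes \sigma_2$ is a sum of tensors $v_1 \otimes v_2$ with each $v_i$ individually $K_n^{H_i}$-invariant. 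The corresponding matrix coefficient factors as a product of matrix coefficients of the $\sigma_i$, and applying (PSC) to each factor confines the support to $B^{H_1}(c n) \times B^{H_2}(c n) \subset B^H(Cn)$, where the inclusion uses that the norm on $\mathfrak{h}_1 \oplus \mathfrak{h}_2$ is equivalent to the maximum of the norms on each summand.
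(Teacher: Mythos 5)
The paper states Lemma~\ref{LemmaPSCComp} without proof, so there is nothing to compare against directly; your proposal must stand on its own. Your argument for part~(2) is correct: invariants in an external tensor product factor, matrix coefficients of pure tensors factor, and the norm on $\mathfrak{h}_1\oplus\mathfrak{h}_2$ is the maximum of the factor norms, so $B^{H_1}(cn)\times B^{H_2}(cn)=B^{H}(cn)$.

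For part~(1) there is a genuine gap. After you reduce to a finite central isogeny $p\colon\grp{H}\to\grp{H}'$ of semisimple groups, the map on $F$-points $p\colon H\to H'$ is \emph{not} surjective in general: its image $p(H)$ is only an open normal subgroup of finite index, with cokernel embedding into $H^1(F,\ker p)$ (already for $\SL_2\to\PGL_2$ the index is $\lvert F^\times/(F^\times)^2\rvert$). Your pullback argument therefore controls the matrix coefficient $h'\mapsto(\sigma'(h')v',{v'}^\vee)$ only on the subgroup $p(H)$: you get $\bigl(\operatorname{supp}\cap p(H)\bigr)\subset p(B^H(cn))\subset B^{H'}(cn)$, which says nothing about the other cosets of $p(H)$ in $H'$. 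Relatedly, your claim that $\sigma=\sigma'\circ p$ is irreducible is false in general — an $H$-stable subspace is merely $p(H)$-stable, not $H'$-stable — although this particular slip is harmless, since (PSC) is formulated for quasicuspidal, not just irreducible, representations. To repair the argument, fix once and for all coset representatives $a_1,\dots,a_r$ of $H'/p(H)$. On the coset $a_jp(H)$ one has $(\sigma'(a_jp(h))v',{v'}^\vee)=((\sigma'\circ p)(h)v',(\sigma')^\vee(a_j^{-1}){v'}^\vee)$, and $(\sigma')^\vee(a_j^{-1}){v'}^\vee$ is fixed by $K^H_{n+c_j}$ for a constant $c_j$ depending only on $a_j$; applying (PSC) for $H$ at level $n+c_j$ and using the submultiplicativity of the operator norm to absorb $a_j$ then bounds the support on $a_jp(H)$ inside $B^{H'}(Cn)$ with $C$ depending only on $\grp{H}'$. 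Since $r$ is finite and the $a_j,c_j$ are independent of $\sigma'$ and $n$, this closes the gap.
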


The main result of this paper is the following.

\begin{theorem} \label{TheoremBD}
Asssume that the connected semisimple normal subgroups of all proper Levi subgroups $M$ of $G$ satisfy property (PSC).
Then the group $G$ satisfies property (BD).
\end{theorem}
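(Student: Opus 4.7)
The plan is to reduce the problem to the case of a supercuspidal representation on a smaller but still proper Levi subgroup, where hypothesis (PSC) applies directly.

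First I would reduce to supercuspidal $\pi$. Any irreducible smooth $\pi$ of $M$ is a subquotient of $\Ind^M_{P'}(\sigma)$ for some standard parabolic $P' = LU'$ of $M$ and a supercuspidal representation $\sigma$ of $L$. Setting $Q = P'U$ gives a parabolic of $G$ with Levi $L \subsetneq M$, so in particular $L$ is still a proper Levi of $G$. The representation $I_P(\pi,s)$ then realizes as a subquotient of $I_Q(\sigma,s')$ (for an appropriately shifted parameter $s'$), and $M_{\op P|P}(\pi,s)$ is extracted from $M_{\op Q|Q}(\sigma,s')$; the latter in turn factorizes as a composition of rank-one intertwining operators indexed by a reduced expression for the appropriate element of the relative Weyl group. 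So it suffices to bound, uniformly in supercuspidal $\sigma$, the numerator degrees of matrix coefficients of $M_{\op Q|Q}(\sigma,s')^{K_n \cap \hat M}$ by $O(n)$.

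For $\sigma$ supercuspidal, I would work with the defining integral of $M_{\op Q|Q}(\sigma,s')$ over $\op U_Q$. Using the Iwasawa decomposition $\op u = m(\op u) u(\op u) k(\op u)$ with respect to $Q$ together with right $K_n$-invariance of $\varphi$, the matrix coefficient $(M_{\op Q|Q}(\sigma,s')\varphi_{s'},\varphi^\vee_{s'})$ reduces to a finite sum over cosets of $\op U_Q/(\op U_Q \cap K_n)$, with each term of the form $\abs{\chi_Q}(m(\op u))^{s'}\delta_Q(m(\op u))^{1/2} \cdot c_\sigma(m(\op u))$ times bounded data. The hypothesis, applied to the connected semisimple normal subgroups of the proper Levi $L$ and combined with Lemma \ref{LemmaPSCComp} to propagate (PSC) between $L^{\der}$ and $L$ modulo the center, forces $c_\sigma(m(\op u))$ to vanish unless $\norm{m(\op u)}_{\LieG}$ is at most $q^{O(n)}$ modulo $Z(L)$. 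Since $\abs{\chi_Q}(m(\op u))$ is governed by this norm, the powers of $q^{-s'}$ (hence of $q^{-s}$) that appear in the sum are capped by $O(n)$, producing a numerator of degree $O(n)$ and yielding (BDmax).

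The main obstacle will be the first step: performing the reduction from $(M,\pi)$ to $(L,\sigma)$ cleanly in families, and in particular verifying that the decomposition of the non-maximal intertwining operator $M_{\op Q|Q}(\sigma,s')$ into rank-one pieces does not introduce spurious denominator factors which could disrupt the linear-in-$n$ bound on the numerator. A further subtlety is the matching of the level $K_n \cap \hat M$ used in the definition of (BDmax) for $P$ with the compact-open subgroups of $L$ to which (PSC) applies; this matching must be uniform in the choice of $\pi$ and of the intermediate parabolic $Q$. These compatibility issues, rather than any single analytic step, form the technical heart of the argument.
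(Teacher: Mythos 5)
Your reduction to supercuspidal $\sigma$ on a smaller Levi and to rank-one intertwining operators matches the paper's first step (via the proof of \cite[Lemma 20]{MR3001800}), and you correctly isolate (PSC) on the semisimple normal subgroups of proper Levi subgroups as the key input. But the core step has a genuine gap, and it is exactly the one the paper's new machinery is built to fix.

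You write that (PSC) ``forces $c_\sigma(m(\op u))$ to vanish unless $\norm{m(\op u)}$ is at most $q^{O(n)}$ \emph{modulo $Z(L)$},'' and then immediately conclude that ``the powers of $q^{-s'}$ ... are capped by $O(n)$.'' This inference is false: $\abs{\chi_Q}$ is precisely a character that factors through $Z(L)$ (up to bounded quantities), so bounding $m(\op u)$ modulo $Z(L)$ gives no control whatsoever on $\abs{\chi_Q}(m(\op u))$. The failure is not merely cosmetic. When $\grp Q$ is conjugate to $\grp{\op Q}$, as $\op u$ tends to infinity along the closed Bruhat cell the Levi part $m(\op u)$ escapes to infinity \emph{purely in the central direction of $L$}, where (PSC) is vacuous, and the integrand does not have compact support at all. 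Correspondingly the matrix coefficient is genuinely a rational function with a pole factor $\bigl(1-\omega_\sigma(\cdot)\,(q^{-s'})^{2m_Q}\bigr)^{-1}$, not a polynomial (cf.\ Remark \ref{RemarkRationality}); a ``finite sum over cosets of $\op U_Q/(\op U_Q\cap K_n)$'' argument would produce a polynomial and is therefore unattainable. The paper handles this case by applying a difference operator $\Delta_{a,s} = \omega_s(b)^{-1}\bigl(\delta_P(a)^{-1/2}I(b,s)-\omega_s(a)\Id\bigr)$ which pushes sections into the subspace $I(\pi,s)_{k-1}$ vanishing on the closed cell, at the price of a factor $1-\omega_s(b^{-1}a)$ that exactly accounts for the pole. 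Without a device of this kind your approach cannot close.

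Even off the closed cell, the ``directions to infinity'' of $m(\op u)$ in the Iwasawa decomposition must be organized by the $(P,\bar P)$-Bruhat decomposition: different cells $Pw_i\bar P$ correspond to different proper parabolic subgroups $\grp Q_i = w_i\grp{\op P}w_i^{-1}\cap\grp M$ of $\grp M$, and (PSC) must be invoked for each associated $\grp{\tilde M}_i$ via the quantitative reformulation of Lemma \ref{LemmaPSC} (which involves the unipotent radical $V$ and an auxiliary central element $b$, not the raw matrix-coefficient support). The paper runs a downward induction on the cell filtration $I_P(\pi,s)_i$ (Lemmas \ref{LemmaGeometric}, \ref{LemmaInductionStep}, \ref{LemmaSupport}) to get an explicit $m\ge C_5 n$ beyond which $e_{\bar U(m)}\varphi$ is supported in $P\bar U(m)$; this is what turns the defining integral into an integral over the compact group $\bar U(C_5 n)$ and yields the linear degree bound. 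Your outline leaves this entire layer (and the conjugate-case pole) inside ``these compatibility issues,'' but these are not compatibility issues; they are the substance of the proof and where the argument of \cite[Theorem 21]{MR3001800} previously broke down for general maximal parabolics.
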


By Lemma \ref{LemmaPSCComp}, it is enough to assume (PSC) for the isotropic connected $F$-simple normal subgroups of proper Levi subgroups $\grp{M}$.
They correspond to the non-trivial connected proper subdiagrams of the Dynkin diagram of $\grp{G}$ over $F$.

By a standard procedure, we can reduce to the following statement.
Recall that we denote by $\grp{\hat M}$ the subgroup of $\grp{G}$ generated by $\grp{U}$ and $\grp{\op U}$. It is an $F$-simple normal subgroup of $\grp{G}$.

\begin{theorem} \label{TheoremSupercuspidalBD}
Let $\grp{P}$ be a maximal parabolic subgroup of $\grp{G}$ and assume that the connected semisimple normal subgroups of $\grp{M}$ satisfy property (PSC).
Then there exists a constant $C>0$ such that for every irreducible supercuspidal representation $\pi$ of $M$ we have
\[
\deg M_{\bar{P}|P} (\pi, s)^{K_n \cap \hat{M}} \le C n \quad \text{for all}\quad n \ge 1.
\]
\end{theorem}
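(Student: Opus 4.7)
The plan is to express the matrix coefficient of $M(s)^{K_n \cap \hat M}$ as an integral over $\op U$, rewrite the integrand via the Iwasawa decomposition relative to $P$ as a matrix coefficient of the supercuspidal $\pi$, and then apply property (PSC) to truncate the integral to a region of ``radius'' linear in $n$. Once truncated, the integral becomes a finite sum of $O(n)$ monomials in $q^{-s}$, and combined with the uniform denominator bound recalled in Remark \ref{RemarkRationality}, this gives the desired degree bound on the numerator.

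Concretely, for a $K_n \cap \hat M$-invariant vector $\varphi \in I^{K_0}_{P \cap K_0}(\pi|_{M \cap K_0})$ and a corresponding dual vector $\varphi^\vee$, the relevant matrix coefficient takes the form
\[
(M(s)\varphi_s,\varphi_s^\vee) = \int_{\op U} \abs{\chi_P(m(\op u))}^s \delta_P(m(\op u))^{1/2}\, (\pi(m(\op u))\varphi(k(\op u)),\varphi^\vee(1))\, d\op u,
\]
where $m(\op u) \in M$, $u(\op u) \in U$, and $k(\op u) \in K_0$ come from an Iwasawa decomposition $\op u = m(\op u) u(\op u) k(\op u)$. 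A preliminary step, handled via the Iwahori factorization of $K_n$ together with the compactness of $K_0$, is to verify that every value $\varphi(k) \in V_\pi$ is fixed by $K^{M^{\der}}_{c_0 n} = K_{c_0 n} \cap M^{\der}$ for an absolute constant $c_0 > 0$ depending only on $\grp{G}$, uniformly in $k \in K_0$.

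I would then stratify $\op U$ by a filtration of compact open subgroups $\op U_r$, $r \ge 0$ (constructed from the affine root subgroups or directly from the norm $\norm{\cdot}_G$), arranged so that on the annulus $\op U_r \setminus \op U_{r-1}$ one has $\abs{\chi_P(m(\op u))} \asymp q^{-\alpha r}$ and $\norm{m(\op u)}_M \ge q^{\beta r}$ for fixed $\alpha, \beta > 0$ depending only on $\grp{G}$ and $\grp{P}$; these are standard estimates for the Iwasawa projection on the open Bruhat cell. Restricting $\pi$ to $\grp{M}^{\der}$ yields a quasicuspidal representation, and by Lemma \ref{LemmaPSCComp} together with the hypothesis of the theorem, property (PSC) holds for $\grp{M}^{\der}$. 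Consequently, the matrix coefficients of $\pi|_{M^{\der}}$ on $K^{M^{\der}}_{c_0 n}$-invariant vectors are supported in $B^{M^{\der}}(c_1 n)$ for an absolute constant $c_1$. Combined with the growth estimate for $\norm{m(\op u)}_M$, this forces the integrand to vanish outside $\op U_{Cn}$ for an absolute constant $C$, after accounting for the central character of $\pi$ using the factor $\abs{\chi_P}^s \delta_P^{1/2}$.

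What remains is to sum the contributions of the annuli $\op U_r \setminus \op U_{r-1}$ for $r \le Cn$. Each contributes a bounded coefficient times a monomial in $q^{-s}$, giving a Laurent polynomial in $q^{-s}$ of degree $O(n)$; multiplying by the $\pi$-independent denominator bound from Remark \ref{RemarkRationality} yields the asserted estimate on the degree of the numerator. The step I expect to require the most care is the transition between the $K_n \cap \hat M$-invariance of $\varphi$ in the induced representation and the uniform $K^{M^{\der}}_{c_0 n}$-invariance of the values $\varphi(k) \in V_\pi$, since conjugation by $k \in K_0$ does not preserve principal congruence subgroups in any naive sense, and one must exploit both the Iwahori factorization of $K_n$ and the compactness of $K_0$ to recover uniformity in $k$.
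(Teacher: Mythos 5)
Your approach is genuinely different from the paper's: you attempt a direct truncation of the $\bar{U}$-integral using the Iwasawa decomposition, whereas the paper stratifies $G$ by $(\grp{P},\grp{\op P})$-Bruhat cells, filters $I_P(\pi,s)$ accordingly, and peels off strata one at a time via Lemma \ref{LemmaInductionStep}, handling the closed cell (when $\grp{P}$ is conjugate to $\grp{\op P}$) separately with a difference operator $\Delta_{a,s}$.

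The gap is in the step ``$\norm{m(\op u)}_M \ge q^{\beta r}$ for $\op u$ outside $\op U_{r-1}$.'' This is false in general, and the failure is not a technicality. The norm $\norm{\cdot}_M$ is built from $\Ad$, so it is trivial on $Z(M)$; and the Iwasawa projection of $\bar{U}$ can run off to infinity inside $Z(M)$. A concrete example is $\GL_4$ with the $(2,2)$-parabolic: for $\op u = \left(\begin{smallmatrix} I_2 & 0 \\ t I_2 & I_2 \end{smallmatrix}\right)$ with $\abs{t}\to\infty$ one gets $m(\op u)\equiv \operatorname{diag}(t^{-1}I_2,\, tI_2) \pmod{M\cap K_0}$, which is central in $M$, so $\norm{m(\op u)}_M$ stays bounded even though $\op u$ leaves every $\op U_r$. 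In such directions property (PSC) gives no vanishing at all — it is a statement about the semisimple normal subgroups and is vacuous on $Z(M)$ — and the contribution to the integral is a nonzero geometric series in $q^{-s}$ coming from $\omega_\pi \cdot \abs{\chi_P}^s\delta_P^{1/2}$ evaluated along a ray in $Z(M)$. This sums to a rational function with a pole, not a polynomial of degree $O(n)$, so the claim that the integrand ``vanishes outside $\op U_{Cn}$ after accounting for the central character'' cannot be repaired by a simple bookkeeping of $\omega_\pi$. This is exactly the difficulty that forced the paper to introduce the Bruhat filtration: in that picture the ``central escape'' is isolated into the closed cell $P w_k \op P$, which is then controlled by the operator $\Delta_{a,s}$, producing the denominator $1-\omega_s(b^{-1}a)$ explicitly. (Also, for the intermediate cells $P w_i \op P$, $1<i<k$, the paper applies property (PSC) not to $M^{\der}$ with respect to the Iwasawa projection of $\op u$ but to the element $b = w_i\tau_P^{-m}w_i^{-1}\in Z(L)$ for a \emph{proper} Levi $L$ of $M$, which is precisely where it is genuinely non-central in $M$; your single application of (PSC) to $\pi|_{M^{\der}}$ cannot see this finer structure.) As a minor aside, your worry about passing from $K_n\cap\hat{M}$-invariance of $\varphi$ to invariance of the values $\varphi(k)$ is unfounded: since $K_n$ is normal in $K_0$ and $\hat{M}$ is normal in $G$, the group $K_n\cap\hat{M}$ is normalized by $K_0$, so $\varphi(k)\in V_\pi^{K_n\cap\hat{M}\cap M}$ for every $k\in K_0$ with no loss.
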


Before we proceed further, we derive Theorem \ref{TheoremBD} from Theorem
\ref{TheoremSupercuspidalBD}.

\begin{proof}[Proof of Theorem \ref{TheoremBD}]
Upon replacing $G$ by a Levi subgroup, it is clearly sufficient to show property (BDmax).
For this, just copy the proof of
\cite[Lemma 20]{MR3001800}
for the closed normal subgroup $K = K_n \cap \hat{M}$ of $K_0$ (it is irrelevant that this is in general not an open subgroup of $K_0$).
We reduce to the degree bound
\[
\deg M_{\bar{Q'}|Q'} (\sigma, s)^{K_n \cap \hat{M} \cap M_R} \le C n,
\quad n \ge 1,
\]
where $\sigma$ is an irreducible supercuspidal representation of a Levi subgroup $L$ of $G$, which has co-rank one in another Levi subgroup $M_R$,
$Q'$ is a (maximal) parabolic subgroup of $M_R$ with Levi subgroup $L$,
and $M_{\bar{Q'}|Q'} (\sigma, s)$ the associated intertwining operator between representations of $M_R$.
Moreover, we can assume that $\sprod{\chi_P}{\alpha^\vee} \neq 0$, if $\alpha \in \Sigma_L$ defines the Levi subgroup $M_R$.
The latter condition implies that $\hat{M}_R \subset \hat{M} \cap M_R$, and the required bound follows therefore from Theorem \ref{TheoremSupercuspidalBD}.
\end{proof}

We now turn to the proof of Theorem \ref{TheoremSupercuspidalBD}, which
will occupy the remainder of this section. The strategy of the proof is based on the standard analysis of the restriction of an induced representation to
the opposite parabolic subgroup $\bar{P}$ going back to Bernstein-Zelevinsky (cf. \cite[\S I.3]{MR1989693}).
All constants appearing until the end of this section may depend on $\grp{G}$, $\grp{T}_0$, $\rho$, $\Lambda_\rho$, and the norm
$\norm{\cdot}_{\mathfrak{g}}$ on $\LieG$, but are supposed to be independent
of all other data, in particular the supercuspidal representation $\pi$.

Consider the $(\grp{P},\grp{\op P})$-Bruhat decomposition of $\grp{G}$.
The double classes are parametrized by elements $w_1, \ldots, w_k \in N_G (T_0) \cap K_0$.
(We can take the representatives in $K_0$, because $K_0$ is supposed to be special.)
We fix an ordering of the double cosets such that
the Zariski closure of each coset $\grp{P} w_i \grp{\op P}$ is contained in the union of the cosets $\grp{P} w_j \grp{\op P}$ for $j \ge i$.
This means that the sets $\grp{Z}_i = \bigcup_{j > i} \grp{P} w_j \grp{\op P}$,
$i = 0, \ldots, k$, are Zariski closed.
They form a descending chain
\begin{equation} \label{EquationZi}
\grp{Z}_0 = \grp{G} \supset \grp{Z}_1 \supset \ldots \supset \grp{Z}_{k-1} = \grp{P} w_k \grp{\op P} \supset
\grp{Z}_k = \emptyset.
\end{equation}
In particular, $\grp{P} w_1 \grp{\op P} = \grp{P} \grp{\op P}$ is the big cell, $\grp{Z}_1$ is its complement in $\grp{G}$,
and $\grp{P} w_k \grp{\op P}$ is the unique closed double coset. The latter is a single coset precisely if $\grp{P}$ is conjugate to $\grp{\op P}$.

Before we begin with the main part of the argument, we collect a few simple facts that we will need.

\begin{itemize}
\item The intersection $w_i^{-1} \grp{P} w_i \cap \grp{\op P}$ is the semidirect product of
$w_i^{-1} \grp{P} w_i \cap \grp{M}$ and $w_i^{-1} \grp{P} w_i \cap \grp{\op U}$, and
$w_i^{-1} \grp{P} w_i \cap \grp{M}$ is a parabolic subgroup of $\grp{M}$ containing $\grp{T_0}$.
Also, $w_i \grp{\op P} w_i^{-1} \cap \grp{M}$ is a parabolic subgroup of $\grp{M}$ with unipotent radical
$\grp{V} = w_i \grp{\op U} w_i^{-1} \cap \grp{M}$ and Levi subgroup $\grp{L} = w_i \grp{M} w_i^{-1} \cap \grp{M}$.
Note that $\grp{L} = \grp{M}$ (or equivalently, $\grp{V}$ is trivial) precisely if either $i=1$ or $i = k$ and $\grp{\op P}$ is conjugate to $\grp{P}$.

\item Let $\tau_P \in T_M$ be a generator of $T_M$ modulo $Z(G) (Z(M) \cap K_0)$ such that $\abs{\chi_P} (\tau_P)< 1$.
We have $\abs{\chi_P} (\tau_P) = q^{-m_P}$ with a positive rational number $m_P$, which can be bounded, together with its denominator, solely
in terms of the root system $\rts = R(\grp{T}_0,\grp{G})$
of $\grp{G}$ over $F$. There exists a positive integer $C_1$ such that
\begin{equation} \label{EquationConjugationTau}
\tau_P K_{n+C_1} \tau_P^{-1} \subset K_n,  \quad \tau_P^{-1} K_{n+C_1} \tau_P \subset K_n \quad \text{for all} \quad n \ge 0.
\end{equation}
(Namely, if $\rho (\tau_P)$ and $\rho (\tau_P)^{-1}$ map the lattice $\Lambda_\rho$ into $\varpi^{-k} \Lambda_\rho$
for some positive integer $k$, then we can take $C_1=2k$.)

\item Set
\[
\bar{U} (m) = \tau_P^{m} (\bar{U} \cap K_0) \tau_P^{-m}, \quad m \ge 0.
\]
The subgroups $\bar{U} (m)$ of $\bar{U}$ are normalized by $K_0  \cap M$ and form an exhausting filtration of $\bar{U}$. On the other hand, we have
\begin{equation}
\label{EquationConjugationU}
\tau_P^{m} (U \cap K_0) \tau_P^{-m} \subset U \cap K_{m-C_2+1}, \quad m \ge C_2-1,
\end{equation}
for a suitable positive integer $C_2$. (This follows easily by identifying
$U$ with its Lie algebra via the exponential map.)

\item Write $\abs{\chi_P} (w_k \tau_P^{-1} w_k^{-1}) = q^{-m'_P}$, where
$m'_P = m_{w_k \bar{P} w_k^{-1}} > 0$ is again bounded, together with its denominator, solely in terms of $\rts$.
(Note that $w_k \grp{\op P} w_k^{-1}$ is a standard parabolic subgroup.)
As a consequence of (the $p$-adic analog of) Kostant convexity \cite[Proposition 4.4.4]{MR0327923}, we have then
\begin{equation} \label{EquationChiPUm}
1 \ge \abs{\chi_P} (\bar{u}) \ge q^{- (m_P+m'_P) m} \quad \text{for all} \quad\bar{u} \in \bar{U} (m).
\end{equation}
\item We also note the following elementary facts:
\begin{eqnarray}
\bar{U} \cap w_i^{-1} P w_i  M
& = & \bar{U} \cap w_i^{-1} P w_i, \label{EquationBarUCap}\\
w_i \bar{U} (m) w_i^{-1} \cap P & = & (w_i \bar{U} (m) w_i^{-1} \cap M)
(w_i \bar{U} (m) w_i^{-1} \cap U) \label{EquationBarUmCap}
\end{eqnarray}
for all $1 \le i \le k$ and $m \ge 0$.
\end{itemize}

The basic fact underlying our argument is the following geometric lemma.

\begin{lemma} \label{LemmaGeometric}
For all $1 \le i \le k$ and all $n \ge 1$ we have
\[
P w_i \bar{P} \subset Z_i (U \cap K_n) \cup P w_i \bar{U} (C_2 n) (M \cap K_0).
\]
\end{lemma}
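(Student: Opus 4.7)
The plan is to normalize any $g \in Pw_i \bar{P}$ to the canonical form $g = p w_i \bar{u} k_M$ with $k_M \in M \cap K_0$, and then dichotomize according to the filtration level of $\bar{u}$ in $\{\bar{U}(m)\}_{m \ge 0}$. Start from an arbitrary decomposition $g = p w_i \bar{u} m$ with $p \in P$, $\bar{u} \in \bar{U}$, $m \in M$, which exists since $\bar{P} = \bar{U} M$. The subgroup $M \cap w_i^{-1} P w_i$ is a parabolic subgroup of $M$ containing $\grp{T}_0$ (as noted in the structural bullets preceding the lemma), so the Iwasawa decomposition gives $M = (M \cap w_i^{-1} P w_i)(M \cap K_0)$. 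Writing $m = m_0 k_M$ accordingly, one absorbs $w_i m_0 w_i^{-1} \in P$ into the $p$-factor while simultaneously conjugating $\bar{u}$ by $m_0^{-1}$ (which preserves $\bar{U}$ since $M$ normalizes $\bar{U}$), yielding $g = p' w_i \bar{u}' k_M$ of the required form.

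Let $\ell \ge 0$ be minimal with $\bar{u}' \in \bar{U}(\ell)$. If $\ell \le C_2 n$, then $g \in Pw_i \bar{U}(C_2 n)(M \cap K_0)$ and we are done. Otherwise, write $\bar{u}' = \tau_P^\ell \bar{u}_0 \tau_P^{-\ell}$ with $\bar{u}_0 \in \bar{U} \cap K_0$ satisfying the tightness condition $\tau_P \bar{u}_0 \tau_P^{-1} \notin \bar{U} \cap K_0$ (equivalent to minimality of $\ell$). The plan is to find $v_0 \in U$ of norm $\le q^{C_3}$, with $C_3$ depending only on $\grp{G}$, such that $w_i \bar{u}_0 v_0^{-1} \in Z_i$; then set $v := \tau_P^\ell v_0 \tau_P^{-\ell}$ and $u' := k_M^{-1} v k_M$. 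A variant of \eqref{EquationConjugationU} applied to elements of $U$ of bounded norm gives $v \in U \cap K_n$ once $\ell$ exceeds $n$ by a fixed additive constant, which is ensured by $\ell > C_2 n$ for $n \ge 1$ after possibly enlarging $C_2$. Setting $\tau_{P,i} := w_i \tau_P w_i^{-1} \in T_0 \subset M \subset P \cap \bar{P}$, one computes
\[
w_i \bar{u}' v^{-1} = \tau_{P,i}^\ell \, (w_i \bar{u}_0 v_0^{-1}) \, \tau_P^{-\ell},
\]
which lies in $Z_i$ if and only if $w_i \bar{u}_0 v_0^{-1} \in Z_i$, since $\tau_P \in \bar{P}$ and $\tau_{P,i} \in P$ normalize $Z_i$ from the appropriate sides. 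Finally $u' \in U \cap K_n$ (as $k_M \in M \cap K_0$ normalizes $U \cap K_n$), and $g(u')^{-1} = p' (w_i \bar{u}' v^{-1}) k_M \in P \cdot Z_i \cdot (M \cap K_0) = Z_i$ by the left $P$- and right $\bar{P}$-invariance of $Z_i$.

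The principal obstacle is the key geometric step: producing the bounded $v_0$ uniformly in the tight $\bar{u}_0$. This is a Bruhat-theoretic claim generalizing the elementary $\SL(2)$-identity $\bar{u}_\alpha(x) u_\beta(-y) \in Z_1$ when $xy = 1$ (where $v_0$ of norm $\le q$ suffices for any tight $\bar{u}_0$), and requires a careful case-by-case analysis of the root subgroups of $\bar{U}$ and $U$ relative to the position of $w_i$ in the Bruhat stratification, using the exchange conditions and the identities \eqref{EquationBarUCap}, \eqref{EquationBarUmCap}. The remaining steps are routine bookkeeping with Iwasawa, Bruhat, and the $\tau_P$-conjugation estimates \eqref{EquationConjugationTau} and \eqref{EquationConjugationU}.
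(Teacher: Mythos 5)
Your approach is genuinely different from the paper's. You try to normalize an element $g\in Pw_i\bar{P}$ to a canonical form $p\,w_i\,\bar{u}'\,k_M$ and then argue directly on the filtration level of $\bar{u}'$. The paper instead invokes Bruhat--Tits theory: it chooses an Iwahori-like subgroup $\bar I$ adapted to $\bar P$ and $K_0$, establishes a decomposition $G=\bigcup_i Pw_i(\bar P\cap K_0)(U\cap\bar I)$ compatible with the closure order on the Bruhat cells, proves the refinement $Pw_i\bar P\cap Pw_i(\bar P\cap K_0)\bar I=Pw_i(\bar P\cap K_0)$, and then right-translates by $\tau_P^{-m}$. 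This handles all $1\le i\le k$ uniformly and in particular yields $Pw_k\bar P=Pw_k(\bar P\cap K_0)$ for free.

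Your proposal has two substantive problems beyond the ``routine bookkeeping.'' First, you acknowledge the central claim---that for every tight $\bar u_0\in\bar U\cap K_0$ there is a uniformly bounded $v_0\in U$ with $w_i\bar u_0 v_0^{-1}\in Z_i$---as an unresolved ``principal obstacle.'' This is not a small gap: it is precisely the geometric content of the lemma, and your sketch (exchange conditions, case analysis on roots) does not indicate how to establish it for general $\grp{G}$, general $w_i$, and non-abelian $\bar U$. Second, and more seriously, the step is \emph{impossible} as stated when $i=k$ and $\grp{P}$ is not conjugate to $\grp{\op P}$: there $Z_k=\emptyset$, so no $v_0$ can put $w_k\bar u_0 v_0^{-1}$ into $Z_k$, yet the closed cell $Pw_k\bar P$ is a non-trivial union $Pw_k\bar U M$ in this situation and a priori one can choose a decomposition $g=p\,w_k\,\bar u'\,k_M$ with $\bar u'$ arbitrarily large. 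What is actually needed for $i=k$ is exactly the structural identity $Pw_k\bar P=Pw_k(\bar P\cap K_0)$, which is not addressed by your scheme. (One could try to kill $\bar u'$ by first factoring $\bar u\in(\bar U\cap w_k^{-1}Pw_k)(\bar U\cap w_k^{-1}\bar Uw_k)$ and absorbing the first factor into $p$, but you would then have to show $\bar U\cap w_k^{-1}\bar U w_k$ is small, and the subsequent conjugation by $m_0$ from the Iwasawa decomposition disturbs this normal form.) You would need at minimum a separate argument for $i=k$ and a genuine proof of the cancellation claim for $i<k$; the paper's Iwahori-theoretic argument sidesteps both issues and is, in hindsight, the natural route.
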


\begin{proof}
We start with a consequence of Bruhat-Tits theory \cite[\S 4]{MR0327923}.
Recall that $K_0$ is the stabilizer of a special point $x$ in the apartment of $\grp{T}_0$. There exist a chamber $C$ of this apartment and a facet $y$ of $C$, both containing $x$, such that the stabilizer $G_y$ of $y$ and the
stabilizer $\bar{I} = G_C$ of $C$ are related by
$G_y = (\bar{P} \cap K_0) \bar{I} = \bar{I} (\bar{P} \cap K_0)$.
We have then
\[
G = \bigcup_{i=1}^k
P w_i (\bar{P} \cap K_0) \bar{I} = \bigcup_{i=1}^k
P w_i (\bar{P} \cap K_0) (U \cap \bar{I}).
\]

Moreover, if $P w_i \bar{P}$ intersects
$P w_j (\bar{P} \cap K_0) \bar{I}$, then $\grp{P} w_j \grp{\op P}$ lies in the Zariski closure of $\grp{P} w_i \grp{\op P}$,
and in particular $j \ge i$.
To see this, write $(\bar{P} \cap K_0) \bar{I} = (U \cap \bar{I}) (\bar{P} \cap K_0)$, and observe that a non-empty intersection is equivalent to
the existence of $u \in U \cap \bar{I}$, $p \in P$ and $\bar{p}\in \bar{P}$ with $w_j u = p w_i \bar{p}$. But then we have also
$w_j (\tau_P^m u \tau_P^{-m}) = (w_j \tau_P^m w_j^{-1} p) w_i (\bar{p} \tau_P^{-m}) \in P w_i \bar{P}$ for any integer $m$.
Letting $m \to \infty$, we obtain that $w_j$ lies in the topological closure of the Bruhat cell $P w_i \bar{P}$, as claimed.

Finally,
\[
P w_i \bar{P} \cap
P w_i (\bar{P} \cap K_0) \bar{I} =
P w_i (\bar{P} \cap K_0).
\]
For this, we reduce again to the case where $w_i u \in P w_i \bar{P}$ for $u \in U  \cap \bar{I}$.
We can write $u = u_1 u_2$ with $u_1 \in U  \cap w_i^{-1} P w_i \cap \bar{I}$
and $u_2 \in U  \cap w_i^{-1} \bar{U} w_i \cap \bar{I}$. We obtain $w_i u_2 \in P w_i \bar{P}$, which implies that $u_2 = e$.
Therefore $w_i u = w_i u_1 = (w_i u_1 w_i^{-1}) w_i \in P w_i$.

Combining the previous results, we obtain
\[
P w_i \bar{P} \subset
\bigcup_{j > i}
P w_j (\bar{P} \cap K_0) (U \cap \bar{I})
\cup
P w_i (\bar{P} \cap K_0)
\subset Z_i (U \cap \bar{I}) \cup
P w_i (\bar{P} \cap K_0).
\]
Multiplying by $\tau_P^{-m}$ from the right, and observing that $Z_i$ and $P w_i$ are right $T_0$-invariant, we get
\[
P w_i \bar{P}
\subset Z_i \tau_P^{m} (U \cap \bar{I}) \tau_P^{-m} \cup
P w_i \bar{U} (m) (M \cap K_0),
\]
which, after invoking \eqref{EquationConjugationU}, easily yields the assertion of the lemma (even with $n+C_2-1$ instead of $C_2 n$).
\end{proof}

We also need the following reformulation of property (PSC) (which is of course standard at least in a qualitative form).

\begin{lemma} \label{LemmaPSC}
There exists a constant $C_3 \ge 0$ with the following property.
Let $\grp{Q} = \grp{L} \grp{V}$ be a proper parabolic subgroup of $\grp{M}$ containing $\grp{T}_0$, and assume that the semisimple normal subgroup $\grp{\tilde M}$
of $\grp{M}$ generated by $\grp{V}$ and $\grp{\op V}$ satisfies property (PSC) with a constant $c \ge 1$. Then for every integer $n \ge 0$,
every quasicuspidal representation $\pi$ of $M$, and every $b \in Z (L)$ with $|\alpha| (b) \le q^{-C_3}$
for all $\alpha \in \Phi_V$ and
$\min_{\alpha \in \Phi_V} |\alpha| (b) < q^{-c(n+C_3)}$
we have the inclusion
\[
\pi^{K_n \cap \tilde M} \subset \ker \pi (e_{b^{-1} (V \cap K_n) b}).
\]
\end{lemma}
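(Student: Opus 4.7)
My plan is to prove the lemma as a quantitative refinement of the vanishing of the Jacquet module $\pi_V$ of the quasicuspidal representation $\pi$, with the hypotheses on $b$ supplying just enough depth in the anti-dominant chamber of $Z(L)$ for the averaging over $b^{-1}(V \cap K_n) b$ to annihilate all $K_n \cap \tilde M$-invariants. The quantitative control is supplied by property (PSC) for $\tilde M$ applied to the restriction $\pi|_{\tilde M}$, which is quasicuspidal since $\tilde M \triangleleft M$.

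First I would reformulate the statement: the change of variables $v = b^{-1} u b$ in the defining integral yields the operator identity $\pi(e_{b^{-1}(V \cap K_n) b}) = \pi(b)^{-1} \pi(e_{V \cap K_n}) \pi(b)$, so the desired inclusion is equivalent to $\pi(e_{V \cap K_n})(\pi(b) w) = 0$ for every $w \in \pi^{K_n \cap \tilde M}$, which after testing against an arbitrary $v^\vee \in \pi^\vee$ becomes
\[
\int_{V \cap K_n} (\pi(u b) w, v^\vee) \, du = 0.
\]

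Next I would estimate the norm of $b$: since $\Ad(b)$ on $\LieM$ has eigenvalues $\alpha(b)$ for $\alpha \in \rts$, one has $\|b\|_M \ge \max_{\alpha \in \Phi_V} |\alpha|(b)^{-1} > q^{c(n + C_3)}$ by the assumption on $\min_\alpha |\alpha|(b)$. Writing $b = b_0 z$ with $b_0 \in \tilde M$ and $z \in Z(M)^0$ (valid since $Z(L) \subset \tilde M \cdot Z(M)^0$, up to the finite kernel of the isogeny $\tilde M \times Z(M)^0 \to M$, which I absorb into $C_3$), the triviality of $\Ad(z)$ yields $\|b_0\|_{\tilde M} = \|b\|_M$, and since $\|u^{-1}\|_{\tilde M} \le 1$ for $u \in V \cap K_n \subset K_0$, I conclude $\|u b_0\|_{\tilde M} > q^{cn}$, placing $u b_0$ outside $B^{\tilde M}(cn)$. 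Property (PSC) for $\tilde M$ then gives vanishing of the matrix coefficient $h \mapsto (\pi(h) w, v^\vee)$ on $\tilde M$ outside $B^{\tilde M}(cn)$ whenever both $w$ and $v^\vee$ are $K_n \cap \tilde M$-invariant; using the central character of $\pi$ to write $(\pi(ub) w, v^\vee) = \omega_\pi(z) (\pi(u b_0) w, v^\vee)$, the integrand vanishes pointwise for $v^\vee \in (\pi^\vee)^{K_n \cap \tilde M}$, and so does the integral.

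Finally I would extend to arbitrary $v^\vee$. The $K_n \cap \tilde M$-invariance of $w$ together with the self-adjointness of $e_{K_n \cap \tilde M}$ rewrites the integral as a pairing $(w, T^\vee v^\vee)$ with $T^\vee := e^\vee_{K_n \cap \tilde M}\, \pi^\vee(b^{-1})\, e^\vee_{V \cap K_n}$, whose image lies in $(\pi^\vee)^{K_n \cap \tilde M}$; by the non-degenerate pairing on $K_n \cap \tilde M$-invariants and the arbitrariness of $w$, the conclusion reduces to showing $T^\vee = 0$ on $\pi^\vee$. Here the hypothesis $|\alpha|(b) \le q^{-C_3}$ for every $\alpha \in \Phi_V$ enters: it ensures $b^{-1}(V \cap K_n) b \supset V \cap K_0$, placing the image of $\pi^\vee(b^{-1}) e^\vee_{V \cap K_n}$ in a large enough invariant subspace of $\pi^\vee$ that the subsequent projection $e^\vee_{K_n \cap \tilde M}$ annihilates it, via a symmetric application of PSC to $\pi^\vee$ (also quasicuspidal, with the same constant $c$) combined with the Iwahori factorization of $K_n \cap \tilde M$ relative to $Q$. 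The main obstacle is precisely this last step, where an apparent self-dual circularity is broken by the structural containment $b^{-1}(V \cap K_n) b \supset V \cap K_0$ afforded by the pointwise depth assumption; accordingly $C_3$ must be chosen to accommodate both this containment requirement and the finite-index reduction $b = b_0 z$.
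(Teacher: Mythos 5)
Your first two paragraphs track the paper's strategy accurately: the reformulation $\pi(e_{b^{-1}(V\cap K_n)b}) = \pi(b)^{-1}\pi(e_{V\cap K_n})\pi(b)$, the norm estimate $\|b\|>q^{c(n+C_3)}$, the reduction to $b\in\tilde M$ via an element with comparable eigenvalues (the paper does this at the end of its proof), and the application of (PSC) to the quasicuspidal restriction $\pi|_{\tilde M}$ to obtain vanishing of $(\pi(ub_0)w,v^\vee)$ when \emph{both} $w$ and $v^\vee$ are $K_n\cap\tilde M$-invariant. But this only shows that $\pi(e_{b^{-1}(V\cap K_n)b})w$ is orthogonal to $(\pi^\vee)^{K_n\cap\tilde M}$, i.e.\ $e_{K_n\cap\tilde M}\,\pi(e_{b^{-1}(V\cap K_n)b})w=0$, which is strictly weaker than the claimed $\pi(e_{b^{-1}(V\cap K_n)b})w=0$. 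You correctly recognize that upgrading to arbitrary $v^\vee$ is the real content, but your proposed mechanism for this step is wrong.

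The containment you invoke, $b^{-1}(V\cap K_n)b\supset V\cap K_0$, is false for $n$ large. Expanding $V\cap K_n$ by $b^{-1}$ gains roughly $\min_{\alpha\in\Phi_V}\log_q|\alpha|(b)^{-1}\ge C_3$ levels in each simple root direction of $V$, so the best one can say from $|\alpha|(b)\le q^{-C_3}$ for all $\alpha\in\Phi_V$ is $b^{-1}(V\cap K_n)b\supset V\cap K_{n-C_3'}$ for a fixed $C_3'$; this is nowhere near $V\cap K_0$ once $n>C_3'$. The only direction guaranteed to have great depth is a single $\alpha_0$ realizing the minimum $<q^{-c(n+C_3)}$, which does not control the other root directions. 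With this containment removed, your final step stalls precisely at the self-dual circularity you flag: showing $e^\vee_{K_n\cap\tilde M}\,\pi^\vee(b^{-1})\,e^\vee_{V\cap K_n}=0$ is the same type of statement you started with.

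The paper breaks the circularity on the $\pi$-side, not through duality. One first constructs a \emph{totally decomposed} open compact subgroup $\tilde K_n\subset\tilde M$ with $K_{n+C_3}\cap\tilde M\subset\tilde K_n\subset K_n\cap\tilde M$ and an Iwahori factorization $\tilde K_n=(\tilde K_n\cap V)(\tilde K_n\cap L\cap\tilde M)(\tilde K_n\cap\bar V)$. Applying (PSC) at the shifted level $m=n+C_3$ gives $\pi(e_{b^{-1}(K_m\cap\tilde M)b})v=0$, hence $\pi(e_{b^{-1}\tilde K_nb})v=0$. Then one factors
\[
e_{b^{-1}\tilde K_nb}=e_{b^{-1}(\tilde K_n\cap V)b}\,e_{\tilde K_n\cap L\cap\tilde M}\,e_{b^{-1}(\tilde K_n\cap\bar V)b},
\]
and observes that the two rightmost idempotents fix $v$: the $L$-part because $b$ centralizes $L$, and the $\bar V$-part because the hypothesis $|\alpha|(b)\le q^{-C_3}\le 1$ makes $b^{-1}(\tilde K_n\cap\bar V)b\subset\tilde K_n\cap\bar V$ (conjugation by $b^{-1}$ \emph{contracts} $\bar V$, while it expands $V$). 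This forces $\pi(e_{b^{-1}(\tilde K_n\cap V)b})v=0$, and since $\tilde K_n\cap V\subset K_n\cap V$, the desired $\pi(e_{b^{-1}(K_n\cap V)b})v=0$ follows. The idea you are missing is exactly this: replace $K_n\cap\tilde M$ by a totally decomposed refinement, pay a uniform cost $C_3$ in the level at which (PSC) is invoked, and use the \emph{contraction} of $\bar V$ rather than an (unavailable) expansion of $V$ to eliminate the unwanted factors.
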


\begin{proof}
Assume first that $b$ is contained in $\tilde M$.
We take $C_3 \ge 0$ such that for every $n \ge 0$ there exists an open compact subgroup $\tilde K_n$ of $\tilde M$ with
\[
K_{n+C_3} \cap \tilde M \subset \tilde K_n
\subset K_{n} \cap \tilde M,
\]
that is totally decomposed, i.e. satisfies
\[
\tilde K_n = (\tilde K_n \cap \tilde M \cap M_0) \prod_{\alpha \in \Sigma_{\tilde M}} (\tilde K_n \cap U_{\alpha})
\]
(in any order).
To show the existence of $\tilde K_n$,
let $e$ be the absolute ramification degree of $F$ (i.e., $p \varpi^{-e}$ is a unit in $\OOO$)
and note that if we identify $\grp{\tilde M}$ with its image in $\GL (V_\rho)$ via $\rho$, then
for $n > e / (p-1)$ the $p$-adic logarithm map maps $K_{n} \cap \tilde M$ bijectively onto $\varpi^n \Lambda_{\tilde M}$ for a certain $\OOO$-Lie lattice
$\Lambda_{\tilde M}$ in the Lie algebra of $\grp{\tilde M}$, regarded as a subspace of the $F$-vector space $\mathfrak{gl} (V_\rho)$.
It remains to take an open $\OOO$-Lie sublattice $\Lambda^{\rm td}_{\tilde M} \subset \Lambda_{\tilde M}$ satisfying
\[
\Lambda^{\rm td}_{\tilde M} = \Lambda^{\rm td}_{\tilde M} \cap
\Lie (\grp{\tilde M} \cap \grp{M_0}) + \sum_{\alpha \in \Phi_{\tilde M}}
(\Lambda^{\rm td}_{\tilde M} \cap \mathfrak{u}_\alpha),
\]
and to let $\tilde K_n = \exp (\varpi^n \Lambda^{\rm td}_{\tilde M})$ for $n > e / (p-1)$ and
$\tilde K_n = \exp (\varpi^{\lceil  e / (p-1) \rceil} \Lambda^{\rm td}_{\tilde M})$, otherwise.

The decomposition of $\tilde K_n$ implies that
\[
\tilde K_n = (\tilde K_n \cap V) (\tilde K_n \cap L \cap \tilde M)
(\tilde K_n \cap \bar{V})
\]
and
\[
\tilde K_n \cap \bar{V} = \prod_{\alpha \in \Sigma_V} (\tilde K_n \cap \bar{V}_{-\alpha}).
\]
Let $v$ be a $K_n \cap \tilde M$-invariant vector in the space of $\pi$.
By our assumption on property (PSC) for $\tilde M$, we have
$\pi (e_{K_m \cap \tilde M}) \pi (b) v = 0$ and therefore also
$\pi (e_{b^{-1} (K_m \cap \tilde M) b}) v = \pi (b)^{-1} \pi (e_{K_m \cap \tilde M}) \pi (b) v = 0$ for $m \ge n$
if $\norm{b} > q^{cm}$.
We apply this for $m = n+C_3$ and deduce that
$\pi (e_{b^{-1} \tilde K_n b}) v = 0$, since by assumption $\norm{b}
\ge \max_{\alpha \in \Phi_V} |\alpha| (b)^{-1}
> q^{c(n+C_3)}$.
From the factorization of $\tilde K_n$ we get that
\[
e_{b^{-1} \tilde K_n b} = e_{b^{-1} (\tilde K_n \cap V) b} e_{\tilde K_n \cap L \cap \tilde M} e_{b^{-1}
(\tilde K_n \cap \bar{V}) b}.
\]
By our assumption on $b$, we have
$b^{-1} (\tilde K_n \cap \bar{V}_{-\alpha}) b \subset \tilde K_n \cap \bar{V}_{-\alpha}$ for all $\alpha \in \Sigma_V$ and therefore
$b^{-1} (\tilde K_n \cap \bar{V}) b \subset \tilde K_n \cap \bar{V}$. Since $v$ is $\tilde K_n$-invariant, we conclude that
$\pi (e_{b^{-1} (K_n \cap V) b}) v = 0$, as required.

It remains to consider the case where $b$ is not necessarily contained in
$\tilde M$.
For this it suffices to observe that with a suitable choice of $C \ge 0$ for every $b \in Z(L)$ there exists an element $b' \in Z(L) \cap \tilde M$ with
$\abs{\alpha} (b) \le \abs{\alpha} (b') \le \abs{\alpha} (b) q^C$ for all
$\alpha \in \Phi_V$. In particular, this implies that $(b')^{-1} (K_n \cap V) b'$ is contained
in $b^{-1} (K_n \cap V) b$. Applying the previous argument to $b'$, with $C_3$ replaced by $C_3+C$, yields the assertion.
\end{proof}

We now consider the ascending filtration of the space $I_P (\pi,s)$ by $\bar{P}$-invariant subspaces corresponding to the descending chain \eqref{EquationZi}:
\[
I_P (\pi,s)_i = \{ \varphi \in I_P (\pi,s) \, : \, \varphi|_{Z_i} = 0 \}, \quad i = 0, \ldots, k.
\]
The first non-trivial space $I_P (\pi,s)_1$ is the space of all sections with support contained in the big cell $P \bar{P}$.

Consider the projector $e_{\bar{U} (m)}$ acting on $I_P (\pi,s)$. Clearly, $e_{\bar{U} (m)}$ maps each space $I_P (\pi,s)_i$ to itself.
We note the following simple consequence of \eqref{EquationConjugationTau}:
\begin{equation} \label{EquationLevel}
\text{The idempotent $e_{\bar{U} (m)}$ maps $K_n \cap \hat{M}$-invariants to $K_{n+2 C_1 m} \cap \hat{M}$-invariants.}
\end{equation}

\begin{lemma} \label{LemmaInductionStep}
There exists a non-negative constant $C_4$ (in fact, we can take $C_4 = 2+2C_1 C_2+C_3$) with the following property.
Assume that the connected semisimple normal subgroups of $M$ satisfy property (PSC) with a constant $c_M \ge 1$.
Let $2 \le i \le k$, with $i < k$ in case $\grp{P}$ is conjugate to $\grp{\op P}$.
Then for every $K_n \cap \hat{M}$-invariant function $\varphi \in I_P (\pi,s)_i$, the function
$e_{\bar{U} (m)} \varphi$ belongs to the space
$\varphi \in I_P (\pi,s)_{i-1}$ for all
$m \ge C_4 c_M n$.
\end{lemma}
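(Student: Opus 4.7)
The plan is to use the Bruhat-type filtration to reduce the claim to vanishing of an integral, and then exploit supercuspidality of $\pi$ via Lemma \ref{LemmaPSC}. Since $Z_i$ is right-$\bar P$-stable (as a union of $(\grp P,\grp{\op P})$-double cosets), $e_{\bar U(m)}$ preserves $I_P(\pi,s)_i$, so it suffices to show $(e_{\bar U(m)}\varphi)(g) = 0$ for $g \in Z_{i-1}\setminus Z_i = P w_i \bar P$. Apply Lemma \ref{LemmaGeometric} with a suitable $n' \sim n$ to cover
$P w_i \bar P \subset Z_i(U\cap K_{n'})\cup Pw_i\bar U(C_2 n')(M\cap K_0)$.
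The first piece vanishes via the right $K_{n+2C_1 m}\cap \hat M$-invariance of $e_{\bar U(m)}\varphi$ (from \eqref{EquationLevel}) combined with a commutation through the big cell $\bar P P$ that rewrites $zu\bar v = z'u'$ with $z'\in Z_i$ (so $\varphi(z')=0$) and $u'\in U$ still inside the invariance group; matching the levels here produces the $2C_1C_2$ contribution to $C_4$. For the second piece, right-translating by $k\in M\cap K_0$ (which preserves the invariance structure) and absorbing $\bar u\in \bar U(C_2 n')\subset\bar U(m)$ into the integration reduces the problem to
\[
\int_{\bar U(m)}\varphi(w_i\bar v)\,d\bar v = 0.
\]

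Conjugate $\bar v = w_i^{-1}yw_i$ to rewrite this as $\int_{Y(m)}\varphi(yw_i)\,dy$ with $Y(m):=w_i\bar U(m)w_i^{-1}$. By \eqref{EquationBarUmCap} and the root-group structure, $Y(m)= V_m U_m\bar U_m''$ with $V_m=Y(m)\cap M$, $U_m=Y(m)\cap U$, $\bar U_m''=Y(m)\cap \bar U$. Since $|\chi_P|^s\delta_P^{1/2}$ is trivial on the unipotent subgroup $V$ and $\pi$ is trivial on $U$, left-$P$-equivariance gives $\varphi(vu\bar u''w_i)=\pi(v)\varphi(\bar u''w_i)$, and Fubini yields
\[
\int_{Y(m)}\varphi(yw_i)\,dy = \vol(U_m)\vol(V_m)\int_{\bar U_m''}\pi(e_{V_m})\varphi(\bar u''w_i)\,d\bar u''.
\]
The element $\tau_i:=w_i\tau_Pw_i^{-1}$ lies in $Z(L)$: $\tau_P\in Z(M)\cap T_0$ and $w_i\in N(T_0)\cap K_0$ give $\tau_i\in T_0\cap L$, and $\tau_i$ centralizes $w_iMw_i^{-1}\supset L$. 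Moreover $V_m = \tau_i^m V_0\tau_i^{-m}$ with $V_0=V\cap K_0$.

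Choose $b=\tau_i^{-j}\in Z(L)$ with $j\sim c_M n$. The values $|\alpha|(\tau_i)=|w_i^{-1}\alpha|(\tau_P)$ for $\alpha\in\Phi_V$ correspond via $w_i^{-1}$ to roots in $-\Phi_U$, so are strictly greater than $1$ and uniformly bounded away from $1$ in terms of the root datum of $\grp G$. Hence the hypotheses (i) $|\alpha|(b)\le q^{-C_3}$ for all $\alpha\in\Phi_V$ and (ii) $\min_\alpha |\alpha|(b)<q^{-c_M(n+C_3)}$ of Lemma \ref{LemmaPSC} hold for suitable $j$, while the inclusion $b^{-1}(V\cap K_n)b\subset V_m$ amounts to $j\le m$ (since $\tau_i^{-k}$-conjugation for $k\ge 0$ contracts $V\cap K_n$ into itself). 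The hypothesis $m\ge C_4 c_M n$ with $C_4=2+2C_1C_2+C_3$ makes these constraints simultaneously satisfiable, also accounting for the fact that the invariance level of $\varphi(\bar u''w_i)$ may be coarser than $K_n\cap\tilde M$ because the Iwasawa decomposition of $\bar u''w_i$ produces a $\tau_P^{-m}$-type factor — this invariance is controlled via \eqref{EquationConjugationTau} and $\tilde M\subset \hat M$ (which follows from $V,\bar V\subset \hat M$ by normality of $\hat M$). By Lemma \ref{LemmaPSC} we have $\pi(e_{b^{-1}(V\cap K_n)b})\varphi(\bar u''w_i)=0$; since $V_m\supset b^{-1}(V\cap K_n)b$, the projector $\pi(e_{V_m})$ factors through $\pi(e_{b^{-1}(V\cap K_n)b})$, giving the vanishing. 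The main obstacle is the precise bookkeeping of invariance levels across Steps 1 and 3 — tracking the commutator estimate $zu\bar v\to z'u'$ and the interaction of $\tau_i^m$- and $\tau_P^{-m}$-conjugations with the $K_n$-structure — which is what determines the explicit form of $C_4$.
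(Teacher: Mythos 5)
Your proposal follows the paper's strategy in broad outline (Bruhat filtration via Lemma \ref{LemmaGeometric}, conjugation by $w_i$, factorization of the integrand, reduction to Lemma \ref{LemmaPSC} with $b$ a power of $\tau_i = w_i\tau_Pw_i^{-1}$), but there is a genuine gap precisely in the ``bookkeeping of invariance levels'' that you flag as the main obstacle. The issue is that in your final integral $\int_{\bar U_m''}\pi(e_{V_m})\varphi(\bar u''w_i)\,d\bar u''$, the variable $\bar u''$ ranges over $\bar U_m''\subset\bar U(m)$, a set whose size grows with $m\ge C_4 c_M n$. Conjugating $K_N$ by $(\bar u''w_i)^{-1}$ then produces a level shift of order $2C_1 m$ (via \eqref{EquationConjugationTau}), so the invariance level of $\varphi(\bar u''w_i)$ is only $N\gtrsim n+2C_1 m$. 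But Lemma \ref{LemmaPSC} requires $\min_\alpha|\alpha|(b)<q^{-c_M(N+C_3)}$, i.e. $j\gtrsim c_M(n+2C_1 m + C_3)$, while the inclusion $b^{-1}(V\cap K_N)b\subset V_m$ forces $j\le m$. These two constraints are incompatible for $c_M\ge 1$ and $C_1\ge 1$, regardless of how $C_4$ is chosen, so the application of Lemma \ref{LemmaPSC} as stated fails.

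What the paper does to avoid this is structurally important: rather than integrating over all of $Y(m)=w_i\bar U(m)w_i^{-1}$ and decomposing $y=vu\bar u''$, it first uses the support restriction $\supp\varphi\cap Pw_i\bar P\subset Pw_i\bar U(C_2 n)(K_0\cap M)$ together with \eqref{EquationBarUCap} to restrict the integration domain to $(\bar U(m)\cap w_i^{-1}Pw_i)\bar U(C_2 n)$. After conjugating by $w_i$, the large part $w_i\bar U(m)w_i^{-1}\cap P$ lands entirely inside $P$ (so it contributes only to $e_{Y(m)\cap M}$ and a $U$-volume), and the residual $\bar U$-offset $\bar u_0$ stays in $\bar U(C_2 n)$ — its size is controlled by $n$, not $m$. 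Consequently $\varphi(w_i\bar u_0 k)$ is $K_{(1+2C_1 C_2)n}\cap M\cap\hat M$-invariant, and with $b=\tau_i^{-m}$ the inequality $m> c_M((1+2C_1 C_2)n+C_3)$ holds once $m\ge C_4 c_M n$ with $C_4=2+2C_1 C_2+C_3$. In your version this restriction of the $\bar U$-offset to level $C_2 n$ is never established (and, indeed, you absorb the $\bar U(C_2 n')$ factor into $\bar U(m)$, which is exactly the wrong direction). One could rescue your Fubini computation by observing that the support restriction forces $\varphi(\bar u''w_i)=0$ unless $\bar u''\in\bar U_{C_2 n}''$ — but that observation, which is the crux of the level matching, is missing.
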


\begin{proof}
Let $\varphi \in I_P (\pi,s)_i$ be invariant under $K_n \cap \hat{M}$. Since $e_{\bar{U} (m)}$ acts on $I_P (\pi,s)_i$ for any $m$,
and $Z_{i-1}$ is the union of $Z_i$ and the double coset $P w_i \bar{P}$, we only have to show that
$e_{\bar{U} (m)} \varphi$ vanishes on $P w_i \bar{P}$, or equivalently, that
\begin{equation} \label{EquationeUm}
\int_{\bar{U} (m)} \varphi (w_i \bar{p} \bar{u}) \, d \bar{u} = 0,
\quad \bar{p} \in \bar{P}, \, m \ge C_4 c_M n.
\end{equation}

By the definition of the space $I_P (\pi,s)_i$, the function $\varphi$ vanishes on the set $Z_i (U \cap K_n)$.
Applying Lemma \ref{LemmaGeometric}, we conclude that
\begin{equation} \label{EquationSupport}
\supp \varphi \cap P w_i \bar{P} \subset P w_i \bar{U} (C_2 n) (K_0 \cap M).
\end{equation}
We take $C_4 \ge C_2$, which ensures that $m \ge C_4 c_M n$ implies $m \ge C_2 n$.
Under this condition, \eqref{EquationeUm} reduces to
\[
\int_{\bar{U} (m)} \varphi (w_i \bar{u} k) \, d \bar{u} = 0, \quad k \in K_0 \cap M, \, m \ge C_4 c_M n.
\]
Combining \eqref{EquationSupport} with \eqref{EquationBarUCap}, we are reduced to showing that
\[
\int_{(\bar{U} (m) \cap w_i^{-1} P w_i) \bar{U} (C_2 n)} \varphi (w_i  \bar{u} k) \, d \bar{u} = 0, \quad k \in K_0 \cap M, \, m \ge C_4 c_M n,
\]
which will clearly follow from the stronger statement that
\[
\int_{\bar{U} (m) \cap w_i^{-1} P w_i} \varphi (w_i \bar{u} \bar{u}_0 k) \, d \bar{u} = 0, \quad k \in K_0 \cap M, \bar{u}_0 \in \bar{U} (C_2 n),
\, m \ge C_4 c_M n.
\]
Conjugating $\bar{u}$ by $w_i$, the integral here is equal to
\[
\int_{w_i \bar{U} (m) w_i^{-1} \cap P} \varphi (\bar{u} w_i \bar{u}_0 k) \, d \bar{u},
\]
which because of \eqref{EquationBarUmCap} is equal to a constant multiple of $e_{w_i \bar{U} (m) w_i^{-1} \cap M}(\varphi (w_i \bar{u}_0 k))$.

It follows easily from \eqref{EquationConjugationTau}, that conjugation by $(w_i \bar{u}_0 k)^{-1}$ maps
$K_{(1+2 C_1 C_2)n} \cap M \cap \hat{M}$ into $K_n \cap \hat{M}$. Therefore,
the element $\varphi (w_i \bar{u}_0 k)$ of the space of $\pi$ is invariant under $K_{(1+2 C_1 C_2)n} \cap M \cap \hat{M}$.
We claim that $e_{w_i \bar{U} (m) w_i^{-1} \cap M}$ annihilates the elements of
$\pi^{K_{(1+2 C_1 C_2)n} \cap M \cap \hat{M}}$ for all $m \ge C_4 c_M n$, which will finish the argument.
To see this, apply Lemma \ref{LemmaPSC} to the parabolic subgroup
$\grp{Q} = w_i \grp{\op P} w_i^{-1} \cap \grp{M}$ of $\grp{M}$
with unipotent radical $\grp{V} = w_i \grp{\op U} w_i^{-1} \cap \grp{M}$
and to the element $b = w_i \tau_P^{-m} w_i^{-1}$.
Our restriction on $i$ implies that $\grp{Q}$ is a proper parabolic subgroup of $\grp{M}$.
Note also that the normal subgroup $\grp{\tilde M}$ of Lemma \ref{LemmaPSC} is contained in the intersection $\grp{M} \cap \grp{\hat{M}}$.
We have $\abs{\alpha (b)} = \abs{\alpha (w_i \tau_P^{-m} w_i^{-1})}
= \abs{\alpha (w_i \tau_P^{-1} w_i^{-1})}^m \le q^{-m}$ for all $\alpha \in \Phi_V$.
Taking $C_4 = 2+2C_1 C_2+C_3$ ensures that
$\abs{\alpha (b)} < q^{-c_M ((1+2 C_1 C_2)n + C_3)}$ for all
$\alpha \in \Phi_V$. With this choice of $C_4$ the hypotheses of Lemma \ref{LemmaPSC} are satisfied, which establishes our claim and finishes the proof.
\end{proof}

\begin{lemma} \label{LemmaSupport}
Assume that the connected semisimple normal subgroups of $M$ satisfy property (PSC) with a constant $c_M \ge 1$.
Then there exists a positive constant $C_5$ (in fact, we may take $C_5 = k C_2 (2 C_1 C_4 c_M)^{k-1}$) with the following property.
For every $K_n \cap \hat{M}$-invariant function $\varphi \in I_P (\pi,s)$, which in addition satisfies $\varphi \in I_P (\pi,s)_{k-1}$
in case $\grp{P}$ is conjugate to $\grp{\op P}$, the function $e_{\bar{U} (m)} \varphi$ is supported in the set $P \bar{U} (m)$
for all $m \ge C_5 n$.
\end{lemma}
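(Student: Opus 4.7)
The proof combines two ingredients: an iterated descent through the Bruhat filtration \eqref{EquationZi} driven by Lemma \ref{LemmaInductionStep}, which will force $e_{\bar U(m)} \varphi$ to vanish outside the big cell $P \bar P$; and the explicit support bound coming from Lemma \ref{LemmaGeometric} at $i=1$, which will localize the support on the big cell itself.

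For the first ingredient, I would set $n_0 = n$ and define recursively
\[
\tilde m_j = \lceil C_4 c_M n_{j-1} \rceil, \quad n_j = n + 2 C_1 \tilde m_j, \quad j \ge 1.
\]
The sequence $\tilde m_j$ is non-decreasing, and the invariance-propagation identity \eqref{EquationLevel} combined with Lemma \ref{LemmaInductionStep} implies inductively that $e_{\bar U(\tilde m_j)} \circ \cdots \circ e_{\bar U(\tilde m_1)} \varphi$ lies in $I_P(\pi,s)_{k-j}$ and is $K_{n_j} \cap \hat M$-invariant — starting from $\varphi \in I_P(\pi,s)_k$, or from $I_P(\pi,s)_{k-1}$ in the case where $\grp{P}$ is conjugate to $\grp{\op P}$ (using the extra hypothesis in the lemma, and that Lemma \ref{LemmaInductionStep} then excludes $i=k$). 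Since $\bar U(\tilde m_1) \subset \bar U(\tilde m_2) \subset \cdots$, the standard identity $e_{H_2} e_{H_1} = e_{H_2}$ for nested compact subgroups collapses this composition, so that $e_{\bar U(\tilde m_{k-1})} \varphi$ (respectively $e_{\bar U(\tilde m_{k-2})} \varphi$) already lies in $I_P(\pi,s)_1$. For any $m$ at least this large the same identity yields $e_{\bar U(m)} \varphi = e_{\bar U(m)} \circ e_{\bar U(\tilde m_{k-1})} \varphi \in I_P(\pi,s)_1$, i.e.\ it vanishes off the big cell.

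For the second ingredient, the $K_n \cap \hat M$-invariance of $\varphi$ implies $U \cap K_n$-invariance since $U \subset \hat M$, so Lemma \ref{LemmaGeometric} at $i=1$ gives $\supp \varphi \cap P \bar P \subset P \bar U(C_2 n)(M \cap K_0) = P \bar U(C_2 n)$, where the last equality uses that $M \cap K_0 \subset P$ normalizes $\bar U(C_2 n)$. Provided $m \ge C_2 n$, for any $p \in P$ and $\bar u \in \bar U$, if $(e_{\bar U(m)} \varphi)(p \bar u) = \int_{\bar U(m)} \varphi(p \bar u \bar v)\,d\bar v$ is nonzero then some $\bar v \in \bar U(m)$ satisfies $p \bar u \bar v \in P \bar U(C_2 n) \subset P \bar U(m)$, forcing $\bar u \in \bar U(m)$. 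Combined with the first ingredient this gives $\supp e_{\bar U(m)} \varphi \subset P \bar U(m)$.

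Solving the recursion $\tilde m_j = C_4 c_M n + 2 C_1 C_4 c_M \tilde m_{j-1}$ yields the geometric growth $\tilde m_j \lesssim (2 C_1 C_4 c_M)^{j-1} n$; combining this with the $C_2 n$ threshold from the second step and packaging constants crudely gives the announced choice $C_5 = k C_2 (2 C_1 C_4 c_M)^{k-1}$. The main obstacle I expect is the bookkeeping of the recursion: the $\tilde m_j$ must simultaneously exceed the threshold $C_4 c_M n_{j-1}$ required by Lemma \ref{LemmaInductionStep} at the inflating invariance level $n_{j-1}$, and be monotonically non-decreasing so that the idempotent chain telescopes to a single $e_{\bar U(m)}$ applied to $\varphi$.
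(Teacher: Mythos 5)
Your first ingredient — the filtration descent via iterated idempotents with telescoping and the inflating invariance levels $n_j = n + 2C_1\tilde m_j$ — is correct and is essentially the paper's argument. The second ingredient, however, has a genuine gap. You assert that Lemma \ref{LemmaGeometric} at $i=1$ together with $U\cap K_n$-invariance of $\varphi$ gives $\supp\varphi\cap P\bar P\subset P\bar U(C_2 n)(M\cap K_0)$. But the geometric lemma only says $P\bar P\subset Z_1(U\cap K_n)\cup P\bar U(C_2 n)(M\cap K_0)$, and to deduce that $\varphi$ vanishes on the first piece you need $\varphi$ to vanish on $Z_1 = G\setminus P\bar P$: for $g = zu$ with $z\in Z_1$, $u\in U\cap K_n$, invariance gives $\varphi(g)=\varphi(z)$, which need not be zero unless $\varphi\in I_P(\pi,s)_1$. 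A general $K_n\cap\hat M$-invariant $\varphi$ is not supported on the big cell, so your claimed support bound for $\varphi$ itself does not hold.

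The fix — which is what the paper does — is to apply Lemma \ref{LemmaGeometric} at $i=1$ not to $\varphi$ but to $e_{\bar U(\tilde m_{k-1})}\varphi$ (or $e_{\bar U(\tilde m_{k-2})}\varphi$ in the self-conjugate case), which your first ingredient shows lies in $I_P(\pi,s)_1$ and is invariant under $K_{n_{k-1}}\cap\hat M$ with $n_{k-1}=n+2C_1\tilde m_{k-1}$. This yields support in $P\bar U(C_2 n_{k-1})(M\cap K_0) = P\bar U(C_2 n_{k-1})$, and then for $m\ge C_2 n_{k-1}$ one gets $e_{\bar U(m)}\varphi = e_{\bar U(m)}e_{\bar U(\tilde m_{k-1})}\varphi$ supported in $P\bar U(m)$. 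Note that $C_2 n_{k-1}\ge\tilde m_{k-1}$ (since $2C_1 C_2\ge 1$), so the threshold collapses to $m\ge C_2 n_{k-1} = C_2 n\sum_{j=0}^{k-1}(2C_1 C_4 c_M)^j$. This is also why the stated constant has the structure $C_2\sum_{j\ge 0}(\cdots)^j$ rather than $\max(C_2,\cdots)$: your own bookkeeping is inconsistent with your argument — a $C_2 n$ threshold combined with the descent threshold $\tilde m_{k-1}$ would give a strictly smaller $C_5$ than the one you announce, which should have been a warning sign.
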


\begin{proof} By \eqref{EquationLevel}, the function $e_{\bar{U} (m)} \varphi$ is
invariant under $K_{n+2 C_1 m} \cap \hat{M}$ for any $m \ge 0$.
By downward induction on $i = k-1, \ldots, 1$, we can now derive from Lemma \ref{LemmaInductionStep} the following statement:
\[
e_{\bar{U} (m)} \varphi  \in I_P (\pi,s)_{i}^{K_{n+2 C_1 m} \cap \hat{M}},
\quad m \ge m_i = C_4 c_M n \sum_{j=0}^{k-1-i} (2 C_1 C_4 c_M)^j.
\]
Indeed, the case $i=k-1$ is trivial if $\grp{P}$ and $\grp{\op P}$ are conjugate, and follows directly from Lemma \ref{LemmaInductionStep}
otherwise, while for the induction step we apply the lemma to
$e_{\bar{U} (m_i)} \varphi$ and $m \ge m_{i-1} \ge m_i$, and observe that
$e_{\bar{U} (m)} \varphi = e_{\bar{U} (m)} e_{\bar{U} (m_i)} \varphi$ for $m \ge m_i$.

The end result for $i = 1$ is that the function $e_{\bar{U} (m_1)} \varphi$ is $K_{n+2 C_1 m_1} \cap \hat{M}$-invariant and supported on the
big cell $P \bar{P}$. By Lemma \ref{LemmaGeometric}, we have
\[
P \bar{P} \subset (G - P \bar{P}) (K_{n+2 C_1 m_1} \cap \hat{M}) \cup P \bar{U} (C_2 (n+2 C_1 m_1)).
\]
Therefore, $e_{\bar{U} (m_1)} \varphi$ is actually supported on $P \bar{U} (m_0)$, where
\[
m_0 = C_2 (n+2 C_1 m_1) = C_2 n \sum_{j=0}^{k-1} (2 C_1 C_4 c_M)^j.
\]
We obtain the assertion with
$C_5 = C_2 \sum_{j=0}^{k-1} (2 C_1 C_4 c_M)^j \le k C_2 (2 C_1 C_4 c_M)^{k-1}$.
\end{proof}

We can now prove Theorem \ref{TheoremSupercuspidalBD}. We follow
the proof of \cite[Theorem 21]{MR3001800}.

\begin{proof}[Proof of Theorem \ref{TheoremSupercuspidalBD}]
Let $\pi$ be a supercuspidal representation of $M$ and
$\varphi$ be an element of the space $I^{K_0}_{P \cap K_0} (\pi|_{M \cap K_0})^{K_n \cap \hat{M}}$.
The function $\varphi$ has a unique extension to a function $\varphi_s \in I_P (\pi,s)^{K_n \cap \hat{M}}$.
By definition, we have
\begin{equation} \label{EquationMpis}
(M(\pi,s) \varphi_s) (k) = \int_{\bar U} \varphi_s (\bar{u} k) \, d \bar{u},
\quad k \in K_0.
\end{equation}
Assume first that $\grp{P}$ and $\grp{\op P}$ are not conjugate.
In this case, we can replace the integration over $\bar{U}$ in \eqref{EquationMpis} by integration over the compact group $\bar{U} (C_5 n)$:
\begin{equation} \label{EquationMpisRestricted}
(M(\pi,s) \varphi_s) (k) = \int_{\bar U (C_5 n)} \varphi_s (\bar{u} k) \, d \bar{u},
\quad k \in K_0.
\end{equation}
To see this, observe that \eqref{EquationMpis} clearly implies that
\begin{equation} \label{EquationEU}
(M(\pi,s) \varphi_s) (k) = \int_{\bar U} (e_{\bar{U} (C_5 n)} I (k) \varphi_s) (\bar{u}) \, d \bar{u}.
\end{equation}
Applying Lemma \ref{LemmaSupport}
to the $K_n \cap \hat{M}$-invariant function $I (k) \varphi_s = \varphi_s (\cdot k)$, $k \in K_0$, shows that the support of the integrand in
\eqref{EquationEU} is contained in $P \bar{U} (C_5 n) \cap \bar{U} = \bar{U} (C_5 n)$, which establishes \eqref{EquationMpisRestricted}.

Let now
$\varphi^\vee \in I^{K_0}_{\op P \cap K_0} (\pi^\vee|_{M \cap K_0})$,
and extend this function to $\varphi^\vee_{s} \in I_{\op P} (\pi^\vee, s)$. Using \eqref{EquationMpisRestricted},
the matrix coefficient $(M(\pi,s) \varphi_s, \varphi_{s}^\vee)$ can be computed as
\[
(M(\pi,s) \varphi_s, \varphi_{s}^\vee)
= \int_{K_0} ((M(\pi,s) \varphi_s) (k), \varphi^\vee (k)) \ dk
= \int_{\op U (C_5 n)} \abs{\chi_P} (\op u)^s f (\op u) \ d\op u
\]
with
\[
f (\op u) = \int_{K_0} (\varphi_0 (\op u k), \varphi^\vee (k)) \ dk.
\]
Using \eqref{EquationChiPUm}, we conclude that the matrix coefficient
$(M(\pi,s) \varphi_s, \varphi_{s}^\vee)$ is a polynomial in $q^{-s}$ of degree $\le (m_P+m'_P) C_5 n$.
This finishes the case where $\grp{P}$ and $\grp{\op P}$ are not conjugate.

We now consider the case where $\grp{P}$ and $\grp{\op P}$ are conjugate.
We first remark that as a consequence of Lemma \ref{LemmaSupport} we still have
\begin{equation} \label{EquationMpisRestricted2}
(M(\pi,s) \psi_s) (e) = \int_{\bar U (C_5 n)} \psi_s (\bar{u}) \, d \bar{u}
\quad \text{for all} \quad \psi_s \in I (\pi, s)_{k-1}^{K_n \cap \hat{M}}.
\end{equation}
Moreover, the filtration space $I (\pi, s)_{k-1}$ is simply given by
\[
I (\pi, s)_{k-1} = \{ \psi_s \in I (\pi,s) \, : \, \psi_s (w_k) = 0 \}.
\]

Let $\varphi$ and $\varphi^\vee$ be as above. We will choose
$a \in Z(M)$ and $b = w_k^{-1} a w_k$ as follows (recall that $w_k$ normalizes $M$): if
$\omega_\pi |_{Z(M) \cap K_0} \neq \omega_\pi \circ w_k^{-1} |_{Z(M) \cap K_0}$, where $w_k^{-1}$ acts on $Z(M)$ by conjugation,
then take $a \in Z(M) \cap K_0$ with $\omega_\pi (a) \neq \omega_\pi (b)$.
Otherwise set $a = \tau_P$ and recall that
$|\chi_P| (a) = q^{-m_P}$. Under our assumption that $\grp{P}$ and $\grp{\op P}$ are conjugate, $2 m_P$ is actually a positive integer.
Consider the difference operator
\[
\Delta_{a,s} = \omega_s (b)^{-1} (\delta_P (a)^{-1/2} I (b,s) - \omega_s (a) \Id)
\]
acting on $I_P (\pi,s)$,
where $\omega_s = \omega_\pi |\chi_P|^s$.
It has the following two crucial properties (cf. \cite[p. 448]{MR3001800}):
the image of $\Delta_{a,s}$ is contained in the space $ I (\pi, s)_{k-1}$, and
\[
(M (\pi, s) \Delta_{a,s} \varphi_s) (e) = (1 - \omega_s (b^{-1} a))
(M (\pi, s) \varphi_s) (e),
\]
where $1 - \omega_s (b^{-1} a)$ does not vanish identically for all $s$ by our choice of $a$.
Applying this relation to $I (k)  \varphi_s$, we obtain
\[
(M (\pi, s) \Delta_{a,s} I (k) \varphi_s) (e) = (1 - \omega_s (b^{-1} a))
(M (\pi, s) \varphi_s) (k), \quad k \in K_0.
\]
If we set
\[
\psi_{s,k} = \Delta_{a,s} I (k) \varphi_s \in I (\pi, s)_{k-1},
\]
then $\psi_{s,k}$ is $K_{n'} \cap \hat{M}$-invariant, where $n'=n$ for
$a \in K_0$ and $n'=n+C_1$ for $a = \tau_P$ (using \eqref{EquationConjugationTau}). Therefore we get from
\eqref{EquationMpisRestricted2} that
\[
(M(\pi,s) \psi_{s,k}) (e) = \int_{\bar U (C_5 n')} \psi_{s,k} (\bar{u}) \, d \bar{u}.
\]

We can now compute the matrix coefficent $(M(\pi,s) \varphi_s, \varphi_{s}^\vee)$ as follows:
\[
(1 - \omega_s (b^{-1} a)) (M(\pi,s) \varphi_s, \varphi_{s}^\vee)
= \int_{K_0} ((M(\pi,s) \psi_{s,k}) (e), \varphi^\vee (k)) \ dk
= \int_{\op U (C_5 n')} \abs{\chi_P} (\op u)^s f (\op u) \ d\op u
\]
with
\[
f (\op u) = \int_{K_0} (\psi_{0,k} (\op u), \varphi^\vee (k)) \ dk.
\]
In the case
$\omega_\pi |_{Z(M) \cap K_0} \neq \omega_\pi \circ w_k^{-1} |_{Z(M) \cap K_0}$,
we can therefore conclude as above that the matrix coefficient
$(M(\pi,s) \varphi_s, \varphi_{s}^\vee)$ is a polynomial in $q^{-s}$ of degree $\le (m_P+m'_P) C_5 n$.

In the remaining case, we obtain that the product
\[
(1 - \omega_\pi (b^{-1} a) (q^{-s})^{2m_P}) (M(\pi,s) \varphi_s, \varphi_{s}^\vee)
\]
is a polynomial in $q^{-s}$ of degree $\le (m_P+m'_P) C_5 n' = (m_P+m'_P) C_5 (n+C_1) \le (m_P+m'_P) C_5 (C_1 + 1) n$.
This finishes the proof.
\end{proof}

\begin{remark} \label{RemarkConstant}
The proof shows that the constant $C$ in Theorem \ref{TheoremSupercuspidalBD} can be taken to be
$(m_P+m'_P) C_5 (C_1 +1)$, if $\grp{P}$ and $\grp{\op P}$ are conjugate, and $(m_P+m'_P) C_5$, otherwise.
\end{remark}

\begin{remark} \label{RemarkRationality}
Without any assumption, the same proof establishes that the matrix coefficients of $M(\pi,s)$ are polynomials in $q^{-s}$ in the cases where
either $\grp{P}$ and $\grp{\op P}$ are not conjugate or they are conjugate but
$\omega_\pi |_{Z(M) \cap K_0} \neq \omega_\pi \circ w_k^{-1} |_{Z(M) \cap K_0}$, and that the matrix coefficients of
$(1 - \omega_\pi (w_k^{-1} \tau_P^{-1} w_k \tau_P) (q^{-s})^{2m_P}) M(\pi,s)$ are polynomials in $q^{-s}$ in the remaining case.
\end{remark}

\section{Global uniformity and limit multiplicities}
We recall that in \cite[Corollary 13]{MR3001800} the following result on property (PSC) was obtained
(the second part is based on Kim's result \cite{MR2276772} on the exhaustiveness of Yu's construction of supercuspidal representations
\cite{MR1824988} for large residual characteristic).

\begin{theorem} \label{TheoremPSC}
\begin{enumerate} \item
Assume that every irreducible supercuspidal representation of a given reductive group $H$ defined over a $p$-adic field $F$
is a subrepresentation of the induction of a cuspidal
representation of a subgroup that is open compact modulo the center. Then $H$ satisfies property (PSC).
\item Let $\grp{H}$ be a reductive group defined over a number field $k$.
Then there exists a finite set $S_0$ of non-archimedean places of $k$ such that for all $v \notin S_0$ the group $\grp{H} (k_v)$ satisfies property (PSC).
Moreover, we can take the constant $c$ appearing in the definition of property (PSC) to be independent of $v \notin S_0$.
\end{enumerate}
\end{theorem}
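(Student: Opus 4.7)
For part (1), the plan is to realize the $K_n \cap H^{\der}$-invariant vectors of a supercuspidal $\pi$ via its embedding into a compact induction, and then track how the support of the corresponding matrix coefficients depends on $n$. By the remark after Definition \ref{DefinitionPSC}, it suffices to treat irreducible supercuspidal $\pi$. By hypothesis, $\pi$ embeds into $\operatorname{ind}_J^H \sigma$ for some open subgroup $J \subset H$ that is compact modulo the center of $H$ and some irreducible smooth representation $\sigma$ of $J$. I will realize vectors of $\pi$ as $V_\sigma$-valued functions $f$ on $H$ satisfying $f(jh)=\sigma(j)f(h)$ and having finite support modulo $J$ on the left (the latter using that $\pi$ is supercuspidal, hence lies in the compact induction $\operatorname{c-ind}_J^H \sigma$).

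The first key step is to analyze right $K_n$-invariance in this model: for each coset $Jy$ in $\operatorname{supp}(f)$, the value $f(y) \in V_\sigma$ must be invariant under $\sigma$ restricted to $J \cap yK_ny^{-1}$. Since $\sigma$ is irreducible on a group that is compact mod center, its space of invariants under a descending family of subgroups eventually stabilizes, which restricts the cosets $Jy$ that can appear in $\operatorname{supp}(f)$. Quantitatively, the subgroup $J \cap yK_ny^{-1}$ has a depth (with respect to the norm on $\LieG$) that grows like $n - c' \log \norm{y}$, where $c'$ depends on $\rho$, $\Lambda_\rho$ and $J$ only. Comparing with the (bounded) depth at which $\sigma$ becomes trivial on its invariants yields $\norm{y} \le q^{c_1 n}$, so the support of $f$ modulo $J$ lies in $B^H(c_1 n)$ for some constant $c_1$ depending only on $J$.

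The matrix coefficient is then expressed as
\[
(\pi(h)v,v^\vee) = \sum_{Jy \in J\backslash H} \langle f_v(yh), g_{v^\vee}(y)\rangle,
\]
where $g_{v^\vee}$ realizes $v^\vee$ in a dual compact induction. Nonvanishing forces both $y$ and $yh$ to lie in the supports, which by the previous step are contained in $J \cdot B^H(c_1 n)$. Hence $h$ lies in $B^H(c_1 n)^{-1} \cdot J \cdot B^H(c_1 n)$, which itself lies in $B^H(c n)$ for a suitable constant $c$ (absorbing the bounded contribution of $J$). This gives (PSC) with that $c$.

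For part (2), the plan is to invoke Kim's exhaustion theorem \cite{MR2276772}, which asserts that for all sufficiently large residual characteristic (depending on $\grp{H}$), every irreducible supercuspidal representation of $\grp{H}(k_v)$ arises from Yu's construction \cite{MR1824988} as a compact induction from a subgroup that is open and compact modulo the center. This puts us in the setting of part (1) for all $v$ outside a finite set $S_0$. The uniformity of the constant $c$ across $v \notin S_0$ will follow by verifying that all parameters in the part (1) argument depend only on the global data: the subgroups $J$ produced by Yu's construction are built from Moy--Prasad subgroups associated to points of the Bruhat--Tits building, whose depth relative to $K_{n,v}$ is controlled uniformly by the global lattice $\Lambda_\rho$ and $\Lambda_{\LieG}$, independent of $v$.

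The hardest step will be the uniformity across places in part (2): one must show that the constant $c_1$ extracted in part (1) from the depth of $J$ and the bound on $\sigma$-invariants can be chosen independently of $v$. This requires that the subgroups and depths in Yu's construction have a uniform relation to the principal congruence filtration $(K_{n,v})$ coming from the fixed representation $\rho$, which in turn rests on the fact that for almost all $v$ the groups $K_{0,v}$ are hyperspecial and the Moy--Prasad filtration at each point of the building is commensurable, with uniform constants, with the restrictions of the $K_{n,v}$.
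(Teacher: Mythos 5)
The paper does not contain a proof of Theorem~\ref{TheoremPSC} at all: it is explicitly ``recalled'' from \cite[Corollary~13]{MR3001800}, with the second part attributed to Kim's exhaustion theorem \cite{MR2276772}. So there is no internal argument to compare against; what you have written is a reconstruction of the proof in the cited reference. Your overall strategy for part~(1) --- realize $\pi$ inside a compact induction $\operatorname{c-ind}_J^H\sigma$, translate right $K_n$-invariance of $f$ into $\sigma\bigl(J\cap yK_ny^{-1}\bigr)$-invariance of $f(y)$, bound the support of $f$ modulo $J$, and then bound the support of the matrix coefficient --- is indeed the natural route, and your invocation of the remark that only irreducible supercuspidal $\pi$ need be considered is correct.

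There is, however, a genuine gap in the quantitative part. You assert that ``the subgroup $J\cap yK_ny^{-1}$ has a depth\dots\ that grows like $n-c'\log\norm{y}$, where $c'$ depends on $\rho$, $\Lambda_\rho$ and $J$ only,'' and then conclude a support bound $\norm{y}\le q^{c_1n}$ ``for some constant $c_1$ depending only on $J$.'' But $J$ (and $\sigma$) depend on $\pi$, and property (PSC) requires a \emph{single} constant $c$ valid for all quasicuspidal $\pi$ and all $n\ge1$. As written, your $c_1$ (and then $c$) is allowed to vary with $\pi$, which does not establish (PSC). The point you are missing is that the depth of $\sigma$ (the level at which it becomes trivial) is not a free parameter: if $\pi$ is to have nonzero $K_n\cap H^{\der}$-fixed vectors at all, the depth of $\sigma$ and the ``distortion'' of $J$ relative to the $K_n$-filtration must themselves be controlled linearly by $n$. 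Only after making this link explicit --- e.g.\ by observing that $J$ lies in the stabilizer of a point of the building (so is contained in $B^H(R_0)$ modulo center for a fixed $R_0$), and that the invariance $f(e)\in\sigma^{J\cap K_n}\ne 0$ forces the depth of $\sigma$ to be $O(n)$ --- can one extract a uniform $c$. The phrase ``comparing with the (bounded) depth at which $\sigma$ becomes trivial'' gestures at this but does not close the loop; as stated, ``bounded'' has no reference scale.

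A second, smaller imprecision: the inequality you need is not that $J\cap yK_ny^{-1}$ is ``deep'' but that for $\norm{y}$ large it contains elements far from the identity (e.g.\ a full pro-unipotent subgroup) under which $\sigma$ has no nonzero invariants; this is where the cuspidality of $\sigma$ enters, and it is not a statement about a single depth parameter of the intersection. For part~(2), invoking Kim's exhaustion theorem to reduce to part~(1) outside a finite set $S_0$ is exactly what the reference does, and your discussion of uniformity across places (comparing Moy--Prasad filtrations to the global $K_{n,v}$) identifies the right issue; but again it remains a sketch, and the uniformity claim hinges on the same missing bound linking depth of $\sigma$, the building geometry, and $n$, this time uniformly in $v$.
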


As a consequence, we obtain the following global supplement to Theorem \ref{TheoremBD}.

\begin{theorem} \label{TheoremGlobal}
Let $G$ be a reductive group defined over a number field $k$.
\begin{enumerate} \item
There exists a finite set $S_0$ of non-archimedean places of $k$ such that $G$ satisfies property (BD) for the set $S_{\operatorname{fin}} - S_0$.
\item Suppose that the local groups $G(k_v)$ satisfy the local property (BD) for all $v$ in a set $T$ of non-archimedean places of $k$.
Then $G$ satisfies property (BD) for $T$.
\end{enumerate}
\end{theorem}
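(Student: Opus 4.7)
The plan is to deduce both parts from Theorem \ref{TheoremBD} (which reduces property (BD) for $G$ to property (PSC) on the semisimple normal subgroups of proper Levi subgroups) combined with Theorem \ref{TheoremPSC}(2) (which yields (PSC) at almost all places with a single constant), the only non-formal task being to track the uniformity of the constants across $v$.

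For part (1), I would first enumerate the finitely many $k$-conjugacy classes of proper Levi $k$-subgroups $\grp{M}$ of $\grp{G}$ and, for each, list the finitely many connected semisimple normal $k$-subgroups. Theorem \ref{TheoremPSC}(2), applied to each of these as a reductive $k$-group in turn, produces a finite exceptional set of places together with a (PSC) constant; let $S_0'$ be the union of these finite sets and $c$ the maximum of the constants. Then at every $v \notin S_0'$ the hypothesis of Theorem \ref{TheoremBD} is met locally with the uniform (PSC) constant $c$, so $\grp{G}(k_v)$ satisfies local property (BD) with some constant $C_v$.

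The substantive step is to show that $C_v$ can be chosen independently of $v$ after throwing a further finite set of places into a final $S_0 \supset S_0'$. By Remark \ref{RemarkConstant}, $C_v$ is a polynomial expression in the auxiliary constants $m_P$, $m'_P$, $C_1$, $C_2$, $C_3$, $C_5$ from Theorem \ref{TheoremSupercuspidalBD} and in $c$. Inspection of their definitions shows that $m_P$ and $m'_P$ depend only on the $k_v$-root system of $\grp{G}$; $C_1$ and $C_2$ involve this root system together with the action of fixed $T_M$-generators on the lattices $\Lambda_\rho$ and $\Lambda_\LieG$; $C_3$ further depends on the ramification index $e_v$ and residue characteristic $p_v$ through the exponential-map construction of totally decomposed open subgroups in Lemma \ref{LemmaPSC}; and $C_5$ is polynomial in the rest (with the number of Bruhat cells $k$ being root-theoretic as well). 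For almost all $v$ the group $\grp{G}$ is unramified and quasi-split at $v$, its $k_v$-root system coincides with a fixed absolute root system, the lattices at $v$ are simply the base changes of the global lattices, and $e_v/(p_v-1)<1$; hence after enlarging $S_0'$ by finitely many places to absorb the ramified, small-residue-characteristic and otherwise anomalous $v$, all of these constants can be bounded simultaneously in $v$. This gives global property (BD) for $S_{\operatorname{fin}} - S_0$, proving part (1).

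Part (2) is then formal: with $S_0$ and the uniform constant $C_0$ from part (1), and with local constants $C_v$ for the finitely many $v \in T \cap S_0$ supplied by hypothesis, the maximum $C = \max(C_0, \max_{v \in T \cap S_0} C_v)$ is a uniform constant witnessing global (BD) on $T$. The main obstacle I expect is precisely the uniformity audit in part (1): one must verify that every numerical constant appearing in the proofs of Theorem \ref{TheoremSupercuspidalBD} and Lemmas \ref{LemmaGeometric}--\ref{LemmaSupport} is either purely root-theoretic or is controlled by quantities that stabilize for almost all $v$, with no hidden dependence on the residue cardinality $q_v$; once this bookkeeping is complete, both assertions follow.
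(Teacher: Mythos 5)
Your argument mirrors the paper's proof: both parts are deduced from Theorems \ref{TheoremBD} and \ref{TheoremPSC}(2), with the only substantive point being uniformity in $v$ of the constants entering Theorem \ref{TheoremSupercuspidalBD} (tracked via Remark \ref{RemarkConstant}), which the paper leaves as an exercise to the reader. You supply the missing bookkeeping, and your hedge of enlarging $S_0$ by finitely many places to absorb ramified or small-residue-characteristic $v$ is sound, though the paper asserts (without detail) that $C_1$, $C_2$, $C_3$ can in fact be taken independent of $v$ without further enlargement.
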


\begin{proof} In view of the second part of Theorem \ref{TheoremPSC},
it only remains to show that the constant $C$ in Theorem \ref{TheoremBD}
can be bounded by a uniform value for all $v \in S_{\operatorname{fin}} - S_0$.
This constant is bounded in terms of the constant in Theorem \ref{TheoremSupercuspidalBD},
which by Remark \ref{RemarkConstant} can be bounded in terms of the constant $C_5$.
Since Theorem \ref{TheoremPSC} gives also the boundedness of the constant in the definition of property (PSC),
we are reduced to bounding $C_1$, $C_2$ and $C_3$ uniformly in $v$.
We leave it to the reader to check that the latter constants can indeed be taken to be independent of $v$.
\end{proof}

Using known results on supercuspidal representations, we can be more precise in some cases.

\begin{corollary} \label{CorClassical}
\begin{enumerate} \item
Let $G$ be a split group of rank two or an inner form of $\GL(n)$ or $\SL(n)$ defined over a number field $k$.
Then $G$ satisfies property (BD) with respect to the set $S_{\operatorname{fin}}$ of all non-archimedean places.
\item Let $G$ be a symplectic, special orthogonal or unitary group defined over a number field $k$.
Then $G$ satisfies property (BD) with respect to the set
$S_{\operatorname{fin}} - \{ v  \in S_{\operatorname{fin}} : v | 2 \}$.
\end{enumerate}
\end{corollary}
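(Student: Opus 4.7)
The plan is to chain together the results already established in the paper: by Theorem \ref{TheoremGlobal}(2), it suffices to verify the local property (BD) at each relevant non-archimedean place $v$; by Theorem \ref{TheoremBD}, this in turn reduces to verifying property (PSC) for the connected semisimple normal subgroups of all proper Levi subgroups of $\grp{G}(k_v)$; and by Theorem \ref{TheoremPSC}(1), property (PSC) for a reductive group over $F$ is implied by the statement that every irreducible supercuspidal representation is compactly induced from an open subgroup that is compact modulo the center. So I only need to identify the possible semisimple normal subgroups of proper Levi subgroups for each family of groups in the corollary, and invoke the known construction of supercuspidal representations by compact induction for each such group.

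For part (1), I would first treat the inner forms of $\GL(n)$ and $\SL(n)$: the proper Levi subgroups of an inner form of $\GL(n)$ over $k_v$ are, up to isogeny, products of inner forms $\GL(n_i,D_i)$ with $\sum n_i \dim_{k_v} D_i < n$, whose derived groups are of $\SL$-type over central simple algebras, and for these groups Bushnell--Kutzko (for $\GL(n,F)$) and S\'echerre--Stevens (for $\GL(m,D)$) have constructed all supercuspidals by compact induction from types; the case of inner forms of $\SL(n)$ reduces to the $\GL$ case by restriction of the cuspidal types. For a split group of rank two, every proper Levi subgroup has split rank at most one, so its derived group is a (possibly anisotropic) semisimple group of relative rank $\le 1$; supercuspidals of such groups are well known to be compactly induced (since the Bruhat--Tits building is a point or a tree, one can apply, e.g., Morris's construction or directly check Yu's exhaustion in low rank without restriction on the residue characteristic).

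For part (2), the proper Levi subgroups of a symplectic, special orthogonal, or unitary group are products of general linear groups (over $k_v$ or a quadratic extension of $k_v$) with a smaller classical group of the same type. The semisimple part is therefore a product of $\SL$-type factors and a semisimple classical group. Away from residual characteristic $2$, all irreducible supercuspidal representations of symplectic, special orthogonal, and unitary groups are compactly induced from types, by the work of Stevens (with refinements by Kim and Miyauchi--Stevens in the unitary case), while the $\GL$-factors are again covered by Bushnell--Kutzko. Since $2$ is excluded from the allowed set of places, property (PSC) holds for all relevant subgroups at every $v \in S_{\operatorname{fin}} - \{v : v \mid 2\}$.

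The only non-formal input is the list of compact-induction theorems just cited; granted those, both parts of the corollary follow immediately from the chain Theorem \ref{TheoremGlobal}(2) $\Rightarrow$ Theorem \ref{TheoremBD} $\Rightarrow$ Theorem \ref{TheoremPSC}(1). The main potential obstacle, and the reason one cannot simply appeal to Theorem \ref{TheoremPSC}(2) with empty $S_0$, is precisely that Kim's exhaustion theorem for Yu's construction requires the residual characteristic to be large relative to the group; so the bulk of the work in part (2) is really delegated to Stevens's unconditional construction for classical groups in odd residual characteristic, and in part (1) to the Bushnell--Kutzko/S\'echerre--Stevens theory, which is available in all residual characteristics for inner forms of $\GL$ and $\SL$, and to a direct low-rank verification for split rank-two groups.
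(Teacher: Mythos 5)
Your overall strategy is exactly the paper's: chain Theorem~\ref{TheoremGlobal} $\Rightarrow$ Theorem~\ref{TheoremBD} $\Rightarrow$ Theorem~\ref{TheoremPSC}(1), then feed in the unconditional compact-induction/exhaustion theorems (Bushnell--Kutzko, S\'echerre, S\'echerre--Stevens for $\GL$ and its inner forms; Stevens with Miyauchi--Stevens for classical groups in odd residual characteristic), using Lemma~\ref{LemmaPSCComp} to reduce products and isogeny quotients to simple factors. The one step that should be repaired is your handling of the split rank-two case: the parenthetical ``(possibly anisotropic)'' is incorrect --- every proper Levi of a split group is itself split, so its derived group is isogenous to $\SL(2)$ or trivial --- and neither Morris's construction nor Yu's construction gives unconditional \emph{exhaustion} in rank one (Kim's exhaustion theorem is precisely the result whose residual-characteristic restriction you are trying to avoid); the correct and simpler justification is that (PSC) for $\SL(2)$ is equivalent to (PSC) for $\GL(2)$ since (PSC) depends only on the derived group, and (PSC) for $\GL(2)$ is already covered by the Bushnell--Kutzko citation, which is how the paper implicitly absorbs the rank-two case into the $\GL$-family.
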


\begin{proof}
This is again a consequence of Theorems \ref{TheoremBD} and \ref{TheoremPSC}.
The case of inner forms of $\GL (n)$ follows from the construction of S\'echerre \cite{MR2081220, MR2188448, MR2216835}
and the exhaustion result of S\'echerre--Stevens \cite{MR2427423}, extending earlier results of Bushnell--Kutzko \cite{MR1204652}
and Corwin \cite{MR1079053} for the general linear group itself.
(See \cite{MR2427423} and \cite{MR1204652} for a more complete history of the problem.) We can reduce the case of inner forms of $\SL (n)$ to this case since property (PSC) depends only on the derived group.
The case of classical groups follows from the result of Stevens \cite{MR2390287} (see also \cite[Appendix A]{MR3157998}).
\end{proof}

\begin{remark} \label{RemarkBD}
In \cite[Definition 5.9]{MR3352530}, property (BD) for a reductive group $\grp{G}$ defined over a number field $k$ was defined in a slightly different way.
However, the current property (BD) for the set $S_{\operatorname{fin}}$ implies property (BD) of [ibid.], and more precisely,
our current property (BD) for a set $T$ of non-archimedean places of $k$ implies the old property restricted to places $v \in T$.
To see this, note first that [ibid., Remark 5.13] already takes care of the case of $K_{0,v} \cap \hat{M} (k_v)^+$-invariants (level one at $v$ with respect to
$\hat{M} (k_v)^+$, using the notation of [ibid.]). In the remaining cases observe that the normalized intertwining operators used in [ibid.]
differ from the unnormalized operators used in the current paper by rational functions of $q^{-s}$ whose degree is bounded in terms of $\grp{G}$ only \cite{MR999488}.
The difference between the level with respect to the simply connected cover of $\hat{M}$ and the level with respect to $\hat{M}$ itself can also be accounted for
by adjusting the constant $C$ in the definition of property (BD).\footnote{We remark that in the older definition only Levi subgroups that are defined over $k$
were considered, which suffices for the application to the trace formula and the limit multiplicity problem. The current formulation seems more natural.}
\end{remark}

We can use the results of \cite{1504.04795} to draw consequences for the limit multiplicity problem, for which we also refer to
\cite{MR3352530} for more details. In \cite[Definition 1.2]{1504.04795} we defined the limit multiplicity property for a family $\mathcal{K}$ of open compact subgroups
$K$ of $G (\mathbb{A}^S)$, where $G$ is a reductive group defined over a number field $k$ and $S$ a finite set of places of $k$, including the archimedean places.
We also refer to [ibid., Definition 1.3] for the definition of a non-degenerate family of open compact subgroups.

\begin{definition}
Let $T$ be a finite set of non-archimedean places of $k$ that is disjoint to $S$.
We say that a family $\mathcal{K}$ of open compact subgroups of $G (\mathbb{A}^S)$ has \emph{bounded level} at $T$, if for every
$v \in T$ there exists an integer $n_v$ with $K_{n_v,v} \subset K$ for all $K \in \mathcal{K}$.
\end{definition}

In view of Remark \ref{RemarkBD} above, a trivial modification of the argument of \cite{1504.04795} yields the following variant of [ibid., Theorem 1.4]
(which is the case $S_0 = \emptyset$).

\begin{theorem}
Let $G$ be a reductive group defined over a number field $k$.
Suppose that $G$ satisfies property (TWN) and property (BD) with respect to the set $S_{\operatorname{fin}} - S_0$.
Let $S$ be a finite set of places of $k$, including the archimedean places,
and $\mathbf{K}_0^S$ an open compact subgroup of $G (\mathbb{A}^S)$.
Then limit multiplicity holds for any non-degenerate family $\mathcal{K}$ of open subgroups of $\mathbf{K}_0^S$ that has bounded level at $S_0 - S$.
\end{theorem}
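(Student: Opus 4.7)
The strategy is to reduce to \cite[Theorem 1.4]{1504.04795}, which establishes limit multiplicity under the global hypotheses of (TWN) and (BD) (i.e.\ the case $S_0 = \emptyset$). The bounded-level hypothesis at $S_0 - S$ will allow us to absorb the finitely many ``bad'' places into the fixed part of the test function, thus effectively enlarging $S$ to contain $S_0$.

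First, I would partition the family. For each $v \in S_0 - S$, fix the open compact subgroup $K_{n_v, v} \subset G(k_v)$ guaranteed by the bounded-level hypothesis. Every $K \in \mathcal{K}$ projects to an open subgroup of $\prod_{v \in S_0 - S} G(k_v)$ that contains $\prod_{v \in S_0 - S} K_{n_v, v}$ and is contained in the corresponding projection of $\mathbf{K}_0^S$; since there are only finitely many such intermediate subgroups, $\mathcal{K}$ breaks into finitely many subfamilies $\mathcal{K}_1, \dots, \mathcal{K}_N$, each of constant projection at $S_0 - S$. Setting $S' = S \cup S_0$, each $\mathcal{K}_i$ may be viewed as a family of open subgroups of an enlarged $\mathbf{K}_0^{S'}$ (after absorbing the now-constant factor at $S_0 - S$ into the $S'$-level compact subgroup). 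Non-degeneracy of each $\mathcal{K}_i$ with respect to $S'$ follows from non-degeneracy of $\mathcal{K}$ with respect to $S$, since the condition of \cite[Definition 1.3]{1504.04795} is a local growth condition at places outside $S$ and remains valid after restricting to any of the finitely many parts.

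Second, I would apply \cite[Theorem 1.4]{1504.04795} to each $\mathcal{K}_i$ with $S$ replaced by $S'$. Property (TWN) is global and unaffected. Our hypothesis (BD) at $S_{\operatorname{fin}} - S_0$ trivially implies (BD) at the subset $S_{\operatorname{fin}} - S'$, which is what the earlier theorem requires. The translation between the current formulation of (BD) and the older one in \cite{MR3352530, 1504.04795} is exactly the content of Remark \ref{RemarkBD}. Assembling the resulting limit multiplicity statements for the finitely many $\mathcal{K}_i$ yields the statement for $\mathcal{K}$.

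The main obstacle, consistent with the paper's description of this as a \emph{trivial modification}, is really only bookkeeping: one must verify that the argument of \cite{1504.04795} uses the local property (BD) only place-by-place at the places where the level varies, not at the newly-fixed places in $S_0 - S$. Since (BD) enters the analysis of the spectral side of the trace formula through uniform degree bounds for matrix coefficients of normalized intertwining operators at each $v \notin S'$, while at the finitely many frozen-level places the relevant contributions amount to only finitely many local representation-theoretic inputs, this place-by-place localization is straightforward to check and the original argument transfers verbatim.
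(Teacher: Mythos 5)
The paper supplies no proof beyond the assertion that the result follows from a \emph{trivial modification of the argument} of \cite{1504.04795}, and your final paragraph captures the intended content correctly: property (BD) enters that argument only through degree bounds for local intertwining operators at places where the level of $K$ is unbounded, and at the finitely many frozen-level places $v \in S_0 - S$ only finitely many local levels (hence only finitely many relevant local representation-theoretic inputs) occur, so no uniform bound is needed there. The intended ``proof'' is thus to re-run the argument of \cite{1504.04795} with this observation, not to quote it as a black box.

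Your formal reduction in the first two paragraphs does not go through as written, however. After grouping the elements of $\mathcal{K}$ by their projection to $\prod_{v \in S_0 - S} G(k_v)$, a subgroup $K$ of $\mathbf{K}_0^S$ containing $\prod_{v \in S_0 - S} K_{n_v,v}$ need not split as a direct product of its projections to $\prod_{v \in S_0 - S} G(k_v)$ and to $G(\mathbb{A}^{S \cup S_0})$; by a Goursat-type argument, $K$ may be a ``twisted'' subgroup, the graph of an isomorphism between finite subquotients of the two factors. Hence ``absorbing the now-constant factor at $S_0 - S$ into the $S'$-level compact subgroup'' does not produce a well-defined family of open compact subgroups of $G(\mathbb{A}^{S'})$ from $\mathcal{K}_i$. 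One could try to sandwich each $K$ between product subgroups of bounded index, but limit multiplicity is a statement about weak-$*$ convergence of measures built from covolumes and dimensions of $K$-fixed subspaces, and such statements are not automatically stable under bounded-index replacements (the comparison constants need not tend to $1$). So the black-box reduction should be abandoned in favor of the direct modification of the proof that you sketch in your last paragraph.
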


In particular, by combining Corollary \ref{CorClassical} with the results of \cite{1603.05475} on property (TWN),
we obtain the following new examples of the limit multiplicity property.

\begin{corollary} \label{CorLM}
\begin{enumerate} \item
Let $G$ be a split group of rank two or an inner form of $\GL(n)$ or $\SL(n)$ defined over a number field $k$. Then limit multiplicity holds for any
non-degenerate family $\mathcal{K}$ of open subgroups of a given
open compact subgroup
$\mathbf{K}_0^S$ of $G (\mathbb{A}^S)$.
\item Let $G$ be a quasi-split classical group defined over a number field $k$.
Then limit multiplicity holds for any non-degenerate family $\mathcal{K}$ of open subgroups of
$\mathbf{K}_0^S$ that has bounded level at $\{ v \notin S : v | 2 \}$.
\end{enumerate}
\end{corollary}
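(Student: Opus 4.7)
The plan is to derive both parts of the corollary as direct applications of the preceding unlabeled Theorem, which asserts that limit multiplicity holds for a non-degenerate family $\mathcal{K}$ of open subgroups of $\mathbf{K}_0^S$ of bounded level at $S_0 - S$ whenever $G$ satisfies property (TWN) together with property (BD) for $S_{\operatorname{fin}} - S_0$. So the entire task reduces to verifying (TWN) and (BD) for the groups in question, with a suitable choice of the exceptional finite set $S_0$, and then chasing definitions to match the bounded-level condition.

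For property (BD) I would invoke Corollary \ref{CorClassical}. In case (1), that corollary establishes (BD) with respect to the full set $S_{\operatorname{fin}}$ for split groups of rank two and inner forms of $\GL(n)$ and $\SL(n)$; hence we may take $S_0 = \emptyset$, and the bounded-level condition at $S_0 - S = \emptyset$ becomes vacuous, giving the unrestricted statement claimed. In case (2), Corollary \ref{CorClassical} gives (BD) with respect to $S_{\operatorname{fin}} - \{v \in S_{\operatorname{fin}} : v \mid 2\}$ for symplectic, special orthogonal and unitary groups, which covers the quasi-split classical groups; here we set $S_0 = \{v \in S_{\operatorname{fin}} : v \mid 2\}$, so that $S_0 - S = \{v \notin S : v \mid 2\}$, matching the bounded-level hypothesis in part (2) verbatim.

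For property (TWN) I would simply cite the main results of \cite{1603.05475}, which establish (TWN) for exactly the classes of groups appearing in the corollary: split groups of rank two, inner forms of $\GL(n)$ and $\SL(n)$, and quasi-split classical groups. Combining these two inputs and feeding them into the preceding Theorem (together with Remark \ref{RemarkBD}, which ensures that our current formulation of (BD) is strong enough to apply the argument of \cite{1504.04795}, whose setup uses the older formulation of (BD)) yields both statements.

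There is really no geometric or analytic obstacle internal to this corollary; everything is bookkeeping on top of already-proved results. The only point that deserves a moment of care is matching conventions: one must check that the flavors of classical group in \cite{1603.05475} (where (TWN) is verified) agree with the flavors covered by Corollary \ref{CorClassical} (for (BD)) and with the statement of the corollary, and one must track that the $S_0$ appearing in the Theorem is the same set at which bounded level is imposed. If anything constitutes a genuine difficulty, it is the black-box input on (TWN), but that is a cited result and not a step of the proof itself.
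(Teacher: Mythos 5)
Your proof is exactly the paper's argument: the corollary is stated immediately after the sentence ``by combining Corollary~\ref{CorClassical} with the results of \cite{1603.05475} on property (TWN), we obtain the following,'' and both parts follow by plugging the corresponding choices of $S_0$ (empty, resp.\ the places above $2$) into the preceding theorem, just as you describe. The bookkeeping on $S_0 - S$ and the appeal to Remark~\ref{RemarkBD} are correct.
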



\def\cprime{$'$}

\end{document}